\theoremstyle{plain}
\numberwithin{equation}{section} \numberwithin{figure}{section}
\newtheorem{theorem}{Theorem}[section]
\newtheorem{lemma}[theorem]{Lemma}
\newtheorem{proposition}[theorem]{Proposition}
\newtheorem{corollary}[theorem]{Corollary}
\newtheorem{definition}[theorem]{Definition}
\theoremstyle{definition}
\newtheorem{remark}[theorem]{Remark}
\numberwithin{equation}{section}
\begin{document}
\title[Fractional elliptic equations with first-order terms ]{A capacity-based  condition for existence of solutions to fractional elliptic equations with first-order terms and measures} 
\author{Mar\'ia Laura de Borb\'on  and Pablo Ochoa}
\address{Maria Laura de Borb\'on, Universidad Nacional de Cuyo-CONICET, Mendoza 5500, Argentina} 
\email{laudebor@gmail.com}
\address{Pablo Ochoa, Universidad Nacional de Cuyo-CONICET, Mendoza 5500, Argentina} 
\email{ochopablo@gmail.com}

\subjclass[2010]{35R11, 31A15, 35R06}

\keywords{Fractional Laplacian, Potentials and capacity, PDE's with measures, non-linear gradient terms}


\date{\today}
\begin{abstract} 
In this manuscript, we appeal to Potential Theory to provide a sufficient condition for existence of distributional solutions to fractional elliptic problems with non-linear first-order terms and measure data $\omega$:
$$
\left\{
\begin{array}{rcll}
  (-\Delta)^su&=&|\nabla u|^q + \omega \quad \text{in }\mathbb{R}^n,\, \,\,s \in (1/2, 1)\\
  u & > &0 \quad  \text{in } \mathbb{R}^{n}\\
  \lim_{|x|\to \infty}u(x) & =& 0,
\end{array}
\right.
$$under suitable assumptions on $q$ and $\omega$. Roughly speaking, the condition for exis\-tence states that if the measure data  is locally controlled by the Riesz fractional capacity, then there is a global solution for the equation.   
We also show that if  a positive solution exists, necessarily the measure $\omega$ will be absolutely continuous with respect to the associated Riesz capacity, which gives a partial reciprocal of the main result of this work. Finally, estimates of $u$ in terms of $\omega$ are also given in different function spaces.

\end{abstract}
\maketitle

\section{Introduction }

We study the solvability of the following fractional elliptic problem
\begin{equation}\label{problem}
	\left\{
	\begin{array}{rcll}
	(-\Delta)^su&=&|\nabla u|^q + \omega \quad \text{in }\mathbb{R}^n\\
	u & > &0 \quad  \text{in } \mathbb{R}^{n}\\
	\lim_{|x|\to \infty}u(x) & =& 0,
	\end{array}
	\right.
	\end{equation}where $\frac{1}{2}<s<1$, $n>2s$ and $(-\Delta)^s$ is the classic  fractional Laplacian operator of order $2s$. Here $\omega$ will be a non-negative Radon measure with compact support in $\mathbb{R}^n$.  We consider the super-critical case 
$$q>p^*=\dfrac{n}{n-2s+1},\quad p+q=pq.$$This assumption on $q$ is motivated from the fact that  in the local-case, as we shall detail below, no solutions exist for sub-critical $q$ unless $\omega \equiv 0$. For $W^{1, q}(\mathbb{R}^{n})$-solutions, a similar conclusion is obtained in our framework (we  refer the reader  to Remark \ref{rmk converse} for details).   Moreover,  the super-critical case  allows us to  obtain basic estimates on potentials  as will be clarify  in the proof of the main results.

We  highlight that non-local type operators arise naturally  in continuum mechanics, image processing, crystal dislocation, Non-linear Dynamics (Geophysical Flows), phase transition phenomena, population dynamics, non-local optimal control and game theory  (\cite{BCF}, \cite{BCF12}, \cite{BV}, \cite{C}, \cite{CV10}, \cite{CV}, \cite{DFV}, \cite{DPV}, \cite{GO}, \cite{L00} and the references therein). Indeed, models like  \eqref{eq} may be understood as a Kardar-Parisi-Zhang stationary problem (models of growing interfaces) driving by fractional diffusion (see \cite{KPZ} for the model in the local setting and \cite{AyP} in the nonlocal stage).  In the works \cite{MK} and \cite{MK1} the description of anomalous diffusion via fractional dynamics is investigated and various fractional partial differential equations are derived from L\'evy random walk models, extending Brownian motion models in a natural way. Finally, fractional type operators are also encompassed in mathematical modeling of financial markets, since L\'{e}vy type processes with jumps take place as more accurate models of stock pricing (cf. \cite{A04} and \cite{CT} for some illustrative examples).

In this work we provide a sufficient condition for existence of global solutions to \eqref{eq} based in a relation between $\omega$ and a fractional capacity. We also derive a representation formula for the solution $u$ in terms of Riesz potentials and, as a result, we obtain pointwise and norm estimates and behaviour at infinity of $u$. Finally we demonstrate a necessary condition for the existence of solutions to the problem \eqref{problem}. We show that if problem \eqref{problem} has a positive $W^{1,q}(\mathbb{R}^n)$-solution, then  $\omega$ does not charge sets of Riesz capacity zero, which means $\omega$ will necessarily be absolutely continuous with respect to the corresponding Riesz capacity. This result gives us a partial reciprocal of the main theorem.  We refer the reader to the next section for further details. 

The current  approach of the problem has been inspired by the enlightening results from \cite{Han}. In that paper, the authors provide a criteria for existence of solutions to equations of the form
\begin{equation}\label{local}
-\Delta u = |\nabla u|^{q} + \omega 
\end{equation}in $\mathbb{R}^{n}$. They also prove the same characterization in bounded domains, but in this case $q\geq 2$ is needed for sufficient conditions.  The criteria of solvability is given explicitly in terms of pointwise behaviour of the corresponding Riesz potentials as well as in geometric capacitary terms. We also note that in a parallel theory of equations of the form
$$-\Delta u=|u|^{q}+ \omega$$the role played by the Riesz capacity of order $(2, q')$ is analogous to the results in \cite{Han} (see for instance \cite{AP} and \cite{KV}). As a consequence of the results in \cite{Han} and the potential estimates from \cite{MV} (see also \cite[Section 7.2]{AH} and \cite[Section 11.5]{Ma}) it follows that no solution exists to \eqref{local} when $1 < q \leq n/(n-1)$ unless $\omega \equiv 0$. 

A different approach to characterize existence of solutions to local elliptic equations  in terms of capacities was stated in \cite{BGO}. There, it was proved that a measure is absolutely continuous with respect to $(1, p)$ Riesz capacity if and only if it belongs to $L^{1}(\Omega)+ W^{-1, p'}(\Omega)$. This characterization allows to prove that a solution exists if and only if the measure data is absolutely continuous with respect to the Riesz capacity. Extensions of these results may be found, for instance,  in \cite{BGO2}.

We point out that the problem of existence  of solutions to fractional elliptic problems with first-order terms like \eqref{eq} in bounded domains (with boundary data) has been considered in \cite{CV1}, \cite{CV} and \cite{AyP}. We highlight that our approach is also comparable to \cite[Section 5]{AyP} where a sufficient condition in terms of fractional capacity is also obtained for bounded domains and highly integrable sources. For  fractional diffusion  problems with non-homogeneous boundary conditions we refer to \cite{CH}.

The paper is organized as follows. In Section \ref{preliminaries}, we provide the basic notation and definitions and also state the main result of the paper.  Consequences of the main result, such as representation formula for the solution, pointwise and norm estimates and behaviour at infinite, are also provided.  In Section \ref{proof} we give full details on the proof of Theorem \ref{Theo1}. Finally, Section \ref{converse sec} is dedicated to show the necessary condition for existence of solutions to problem \eqref{problem} and also discuss the converse of the main theorem which is currently an open problem.  

\section{Preliminaries and main results }\label{preliminaries}
The Fourier definition of the fractional Laplace operator $(-\Delta)^s$ is given by



\begin{equation}\label{fracLaplacianFou}
(-\Delta)^su(x)=\mathcal{F}^{-1}(|\xi|^{2s}\mathcal{F}(u)(\xi))(x)
\end{equation}$\xi\in\mathbb{R}^n$ and $u\in \mathcal{S}(\mathbb{R}^n)$, where  $\mathcal{S}(\mathbb{R}^n)$ is the Schwartz class of smooth real-valued rapidly decreasing
functions.  We recall the following integral formulation of $(-\Delta)^{s}$ for functions in $\mathcal{S}(\mathbb{R}^{n})$
$$(-\Delta)^{s}u(x) :=a(n, s) \text{P. V.}\int_{\mathbb{R}^{n}}\frac{u(x)-u(y)}{|x-y|^{n+2s}}dy$$where $$a(n, s)=\frac{2^{2s}s\Gamma\left(\frac{n}{2}+s \right) }{\pi^{\frac{n}{2}}\Gamma(1-s)}$$is a normalization constant to recover \eqref{fracLaplacianFou}. We refer to \cite{Land} and \cite{S} for extensions to H\"{o}lder function spaces.

Following the approach in \cite{S}, we consider the following spaces

$$\mathcal{S}_s(\mathbb{R}^n):=\left\lbrace f\in \mathcal{C}^\infty(\mathbb{R}^n): \forall\beta\in\mathbb{N}_0^n, \sup_{x\in\mathbb{R}^n}(1+|x|^{n+2s})|D^\beta f(x)|<\infty\right\rbrace$$with the family of seminorms

$$[f]_{\mathcal{S}_s(\mathbb{R}^n)}^\beta := \sup_{x\in\mathbb{R}^n}(1+|x|^{n+2s})|D^\beta f(x)|$$and the weighted  Lebesgue space

$$L_s(\mathbb{R}^n):=\left\lbrace u\in L^1_{\textnormal{loc}}(\mathbb{R}^n) : \int_{\mathbb{R}^n}\frac{|u(x)|}{1+|x|^{n+2s}}\;dx<\infty \right\rbrace.$$ We denote by $\mathcal{S}_s'$ the topological dual of $\mathcal{S}_s$.
It is easy to see that $L_s(\mathbb{R}^n)\subset \mathcal{S}_s'(\mathbb{R}^n)$ and if $\varphi\in\mathcal{S}(\mathbb{R}^n)$, then $(-\Delta)^s\varphi\in \mathcal{S}_s(\mathbb{R}^n)$ (see \cite{Buc}). 
Then, for $u\in L_s(\mathbb{R}^n)$, we can define $(-\Delta)^su$ in sense of tempered distributions as

$$\left\langle (-\Delta)^su, \varphi\right\rangle= \left\langle u,(-\Delta)^s\varphi\right\rangle= \int_{\mathbb{R}^n}u(x)(-\Delta)^s\varphi(x)\;dx$$for $\varphi\in \mathcal{S}(\mathbb{R}^n)$. (See \cite{Mar}).

We shall consider weak solution of \eqref{eq} in the following sense.  We start with the case without first-order terms. 
\begin{definition}\label{Defsolomega}
	Consider the equation 
		\begin{equation}\label{eqomega}
		(-\Delta)^s u = \omega.
		\end{equation}Then, $u\in L_s(\mathbb{R}^n)$ is a weak (distributional) solution of \eqref{eqomega} if 
		\begin{equation*}
		\int_{\mathbb{R}^n}u(x)(-\Delta)^s\varphi(x)\,dx = \int_{\mathbb{R}^n}\varphi(x)\,d\omega(x)
		\end{equation*}for all $\varphi\in \mathcal{S}(\mathbb{R}^n)$. If, instead of $\omega$ we have an integrable function $f$, we replace $d\omega(x)$ by $f(x)dx$.		

\end{definition}
\begin{definition}\label{Defsol}
	
	We say that $u\in L_s(\mathbb{R}^n)\cap W_{\textnormal{loc}}^{1,q}(\mathbb{R}^n)$ is a weak solution of the equation \eqref{eq} if for all $\varphi\in S(\mathbb{R}^n)$, $|\nabla u|^{q}\varphi \in L^{1}_{\text{loc}}(\mathbb{R}^{n})$ and 
	\begin{equation*}
	\int_{\mathbb{R}^n}u(x)(-\Delta)^s\varphi(x)\;dx = \int_{\mathbb{R}^n}|\nabla u(x)|^q\varphi(x)\;dx + \int_{\mathbb{R}^n}\varphi(y)\;d\omega(y).
	\end{equation*}
	
\end{definition}

Next we define the Riesz potential $\mathcal{I}_{\alpha}=(-\Delta)^{-\frac{\alpha}{2}}$ on $\mathbb{R}^n$ of order $\alpha$ for $0<\alpha<n$ as follows

$$\mathcal{I}_{\alpha}(g)(x)=c(n,\alpha)\int_{\mathbb{R}^n}\frac{g(y)}{|x-y|^{n-\alpha}}\;dy$$for $g\in L^1_{\textnormal{loc}}(\mathbb{R}^n)$ such that $$\int_{|y|\geq 1}\frac{|g(y)|}{|y|^{n-\alpha}}\;dy<\infty.$$ The constant $c(n,\alpha)$ is defined by

$$c(n,\alpha)=\pi^{-\frac{n}{2}}2^{-\alpha}\Gamma\left( \frac{n-\alpha}{2}\right) \Gamma\left( \frac{\alpha}{2}\right)^{-1}.$$For a non-negative Radon measure $\omega$, we define the Riesz potential $$\mathcal{I}_\alpha(\omega)(x)=c(n,\alpha)\int_{\mathbb{R}^n}\frac{1}{|x-y|^{n-\alpha}}\;d\omega(y).$$The kernel $$I_\alpha(x)=\frac{c(n,\alpha)}{|x|^{n-\alpha}}$$is called the Riesz kernel. In this way, we see that 

$$\mathcal{I}_\alpha (g)(x)=(I_\alpha*g)(x)$$and
$$\mathcal{I}_\alpha(\omega)(x)=(I_\alpha*\omega)(x).$$

 In order to state our main result, we first give the definition of Riesz capacity that will be employed along the work.

\begin{definition}
	
	For $0<\alpha<n$ and $1<q<\infty$, the Riesz capacity $\textnormal{cap}_{\alpha,q}(E)$ of a measurable set $E\subset\mathbb{R}^n$ is defined by
\begin{equation}\label{cap}
\textnormal{cap}_{\alpha,q}(E)=\inf\left\lbrace \|u\|_{L^q(\mathbb{R}^n)}^q:\mathcal{I}_\alpha(u)\geq \mathcal{X}_E,\;u\in L^q_+(\mathbb{R}^n)\right\rbrace. 
\end{equation}

\end{definition}

The main result of this work is the following theorem.

\begin{theorem}\label{Theo1}
	
	Let $\frac{1}{2}<s<1$, $q>p^*$. Suppose that there exists  $C=C(n, q, s)> 0$, such that
	\begin{equation}\label{cap1}
	\omega(E)\leq C\textnormal{cap}_{2s-1,q'}(E)
	\end{equation}for all compact sets $E\subset\mathbb{R}^n$, then the equation
	\begin{equation}\label{eq}
(-\Delta)^su=|\nabla u|^q + \omega \quad \text{in }\mathbb{R}^n
\end{equation} has a non-negative weak solution $u\in W_{\textnormal{loc}}^{1,q}(\mathbb{R}^n)$. Moreover, $u$ has the representation
		\begin{equation}\label{represent}
u = \mathcal{I}_{2s}(|\nabla u|^q+\omega)
\end{equation}

\end{theorem}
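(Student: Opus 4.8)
The plan is to construct the solution by a fixed-point/iteration scheme built on the Riesz potential $\mathcal{I}_{2s}$, exploiting the capacitary hypothesis to control the gradient nonlinearity. First I would reformulate the problem: a non-negative function $u$ solving $u = \mathcal{I}_{2s}(|\nabla u|^q + \omega)$ is, by the mapping properties of $\mathcal{I}_{2s} = (-\Delta)^{-s}$ recalled in the preliminaries, automatically a weak solution of \eqref{eq} in the sense of Definition \ref{Defsol}, since $(-\Delta)^s \mathcal{I}_{2s} = \mathrm{Id}$ on the relevant distribution classes and the tail condition on $|\nabla u|^q + \omega$ (a compactly supported measure plus a locally integrable term with the right decay) places the potential in $L_s(\mathbb{R}^n)$. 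So it suffices to produce a fixed point of the map $T(v) = \mathcal{I}_{2s}(|\nabla v|^q + \omega)$ in a suitable closed convex set.

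The key analytic input is a pointwise potential estimate: differentiating $\mathcal{I}_{2s}$ gives $|\nabla \mathcal{I}_{2s}(\mu)| \lesssim \mathcal{I}_{2s-1}(\mu)$ for a non-negative measure $\mu$, because $\nabla I_{2s}$ is comparable to $I_{2s-1}$ (here $s > 1/2$ is exactly what makes $2s-1 \in (0,1) \subset (0,n)$, so $\mathcal{I}_{2s-1}$ is a bona fide Riesz potential — this is where the restriction on $s$ enters). Hence if $u = \mathcal{I}_{2s}(|\nabla u|^q + \omega)$ then $|\nabla u| \lesssim \mathcal{I}_{2s-1}(|\nabla u|^q + \omega)$. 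Writing $g = |\nabla u|^q$, the natural space in which to close the estimate is the one dual to the capacity $\mathrm{cap}_{2s-1, q'}$: the hypothesis \eqref{cap1} says precisely that $\omega$ is a "trace measure" for $\mathcal{I}_{2s-1}$ acting on $L^{q'}$, and by the Maz'ya–Verbitsky / Adams–Hedberg theory of capacitary strong-type inequalities this is equivalent to the boundedness of $\mu \mapsto (\mathcal{I}_{2s-1}\mu)^q$ in the appropriate weighted sense — concretely, there should be a constant $A$ with $\int (\mathcal{I}_{2s-1}(f\,d\omega))^q\, d\omega \le A \int f^q\, d\omega$ and, more importantly, $\int (\mathcal{I}_{2s-1}\omega)^q\, d\omega \le A\,\omega(\mathbb{R}^n) < \infty$. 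The iteration scheme is then: set $u_0 = c\,\mathcal{I}_{2s}\omega$ and $u_{k+1} = \mathcal{I}_{2s}(|\nabla u_k|^q + \omega)$; using the potential estimate plus the capacitary inequality I would show by induction that $|\nabla u_k| \le M\,\mathcal{I}_{2s-1}\omega$ pointwise for a fixed $M$, provided the constant $C$ in \eqref{cap1} is absorbed correctly — the super-criticality $q > p^* = n/(n-2s+1)$ guarantees the relevant potentials are finite and that the nonlinear feedback term is genuinely a lower-order perturbation, closing the bootstrap. Monotonicity of the iterates ($u_k \nearrow u$) combined with this uniform gradient bound gives, by monotone/dominated convergence, a limit $u$ with $|\nabla u| \le M \mathcal{I}_{2s-1}\omega \in L^q_{\mathrm{loc}}$, hence $u \in W^{1,q}_{\mathrm{loc}}$, and passing to the limit in the potential identity yields \eqref{represent}; positivity is immediate since $\omega \not\equiv 0$ makes $u \ge c\,\mathcal{I}_{2s}\omega > 0$.

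There are two routine-but-necessary checks along the way: (i) justifying $\nabla \mathcal{I}_{2s}(g + \omega) = \mathcal{I}_{2s-1}$-type object, i.e. that differentiation under the integral and the kernel comparison $|\nabla I_{2s}(x)| \le c\, I_{2s-1}(x)$ hold with the stated constants, and that the resulting function lies in $L^q_{\mathrm{loc}}$ so that Definition \ref{Defsol} applies; (ii) verifying that $u \in L_s(\mathbb{R}^n)$ and that the weak formulation is legitimate, using that $|\nabla u|^q + \omega$ is compactly supported modulo fast-decaying tails so that $\mathcal{I}_{2s}$ of it decays like $|x|^{-(n-2s)}$, which also gives $\lim_{|x|\to\infty} u(x) = 0$.

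The main obstacle is step two of the iteration: proving the uniform a priori bound $|\nabla u_k| \le M\,\mathcal{I}_{2s-1}\omega$ survives the nonlinearity. This requires the full strength of the capacitary hypothesis \eqref{cap1} — it is not enough that $\omega$ merely have finite potential energy; one needs the strong-type inequality $\int (\mathcal{I}_{2s-1}(h\,d\omega))^{q}\,d\omega \lesssim \int h^{q}\,d\omega$ to iterate the bound on $|\nabla u_k|^q\,dx$ against $\omega$, and one needs the constant to be small relative to a threshold, which is exactly what choosing $C = C(n,q,s)$ small in \eqref{cap1} buys (or, alternatively, one rescales $u \mapsto \lambda u$ and $\omega \mapsto \lambda\omega$ to reduce to the small-constant case). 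Getting the dependence of $M$ on the capacitary constant right, and confirming that the fixed-point set $\{v : |\nabla v| \le M\,\mathcal{I}_{2s-1}\omega\}$ is mapped into itself, is the crux; everything else is a consequence of standard Riesz-potential and capacity estimates already cited in the paper.
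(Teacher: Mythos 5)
Your architecture is the same as the paper's: iterate $u_{k+1} = \mathcal{I}_{2s}(|\nabla u_k|^q + \omega)$, close a uniform pointwise bound $|\nabla u_k|\lesssim \mathcal{I}_{2s-1}(\omega)$ using the Maz'ya--Verbitsky reformulation of \eqref{cap1}, and pass to the limit. But there is a concrete gap in how you pass to the limit. You assert the iterates are monotone ($u_k\nearrow u$). That is not established, and it will not follow: the increment is $u_{k+1}-u_k = \mathcal{I}_{2s}(|\nabla u_k|^q - |\nabla u_{k-1}|^q)$, whose sign requires $|\nabla u_k|\ge|\nabla u_{k-1}|$ pointwise; but $\nabla u_{k+1}=\nabla\mathcal{I}_{2s}(|\nabla u_k|^q)+\nabla u_0$ is a vector sum, and the triangle inequality only gives an upper bound $|\nabla u_{k+1}|\le|\nabla\mathcal{I}_{2s}(|\nabla u_k|^q)|+|\nabla u_0|$, not the lower bound $|\nabla u_{k+1}|\ge|\nabla u_k|$ that monotonicity would need. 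The paper instead proves geometric contraction: with the capacitary constant small enough, $|\nabla u_{k+1}-\nabla u_k|\le C\delta^k\,\mathcal{I}_{2s-1}(\omega)$ and $|u_{k+1}-u_k|\le C\delta^k\,\mathcal{I}_{2s}\bigl([\mathcal{I}_{2s-1}(\omega)]^q\bigr)$ with $\delta\in(0,1)$, obtained from the elementary bound $|r^q-s^q|\le q|r-s|\max\{r,s\}^{q-1}$ fed back through the potential inequality. This telescoping/Cauchy argument is what replaces your monotonicity step and is the actual mechanism of convergence.

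Two secondary points. First, the form of the Maz'ya--Verbitsky input you cite is a weighted strong-type estimate $\int(\mathcal{I}_{2s-1}(f\,d\omega))^q\,d\omega\lesssim\int f^q\,d\omega$, but what the induction genuinely needs is the \emph{pointwise} equivalent of \eqref{cap1} from \cite[Theorem~2.1]{MV}, namely $\mathcal{I}_{2s-1}\bigl([\mathcal{I}_{2s-1}(\omega)]^q\bigr)\le C_1\,\mathcal{I}_{2s-1}(\omega)$ a.e.\ (equation \eqref{ineqpot} in the paper); this is precisely what turns $\mathcal{I}_{2s-1}(|\nabla u_k|^q)$ back into a multiple of $\mathcal{I}_{2s-1}(\omega)$ and closes the bootstrap. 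Second, you treat ``$(-\Delta)^s\mathcal{I}_{2s}=\mathrm{Id}$ on the relevant distribution classes'' as given; roughly half the paper's proof (Steps~I and~II) consists of actually verifying, via membership in $L_s(\mathbb{R}^n)$ and a Fourier computation built on \cite[Proposition~2.4]{Buc}, that $\mathcal{I}_{2s}(\omega)$ and $\mathcal{I}_{2s}(|\nabla u_0|^q)$ are weak solutions in the sense of Definition~\ref{Defsolomega}. That verification is not automatic in this function-space framework and cannot be elided.
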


\

We point out  that \eqref{cap1} is equivalent,  by Theorem 2.1 from \cite{MV}, to
\begin{equation}\label{ineqpot}	
		\mathcal{I}_{2s-1}\left( \left[ \mathcal{I}_{2s-1}(\omega)\right] ^q\right)\leq C_1 \mathcal{I}_{2s-1}(\omega),  \quad \text{a. e. in }\mathbb{R}^{n}
		\end{equation}for some $C_1>0$ depending on $n$, $q$ and $s$. This relation, involving the non-lineal potential $\mathcal{I}_{2s-1}\left( \left[ \mathcal{I}_{2s-1}(\omega)\right] ^q\right)$, will be frequently used in the proof of Theorem \ref{Theo1}.

		The following result constitutes a partial converse of Theorem \ref{Theo1}. For the feasibility of  the $W^{1, q}(\mathbb{R}^{n})$-regularity of solutions, we refer the reader to Proposition \ref{Coro}. 
		\begin{theorem}\label{partial converse}
		Suppose that \eqref{eq} has a solution $u \in W^{1, q}(\mathbb{R}^{n})$. Then the measure $\omega$ does not charge sets of Riesz capacity zero.
		\end{theorem}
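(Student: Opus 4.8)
The strategy is the standard one for deriving absolute-continuity-type necessary conditions from the existence of a solution: test the equation against a carefully chosen family of functions and pass to a limit. Suppose $u\in W^{1,q}(\mathbb{R}^n)$ solves \eqref{eq}, and let $K$ be a compact set with $\mathrm{cap}_{2s-1,q'}(K)=0$; the goal is to show $\omega(K)=0$. First I would recall (via Theorem 2.1 of \cite{MV} and the duality between Riesz potentials and Bessel/Riesz capacities, as already invoked after Theorem \ref{Theo1}) that $\mathrm{cap}_{2s-1,q'}(K)=0$ forces the existence of a single function, or a sequence, whose Riesz potential blows up on $K$ while its $L^{q'}$-norm stays controlled; equivalently, there is $\rho\in L^{q'}_+(\mathbb{R}^n)$ with $\mathcal{I}_{2s-1}(\rho)=+\infty$ on $K$ and $\|\rho\|_{q'}$ finite, or a sequence $\rho_j$ with $\|\rho_j\|_{q'}\to 0$ and $\mathcal{I}_{2s-1}(\rho_j)\geq 1$ on $K$.

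Next I would produce admissible test objects for Definition \ref{Defsol}. Since $\mathcal{S}(\mathbb{R}^n)$ is the admitted test class but we need potential-type test functions, the plan is first to extend the weak formulation to a larger class — nonnegative functions of the form $\varphi=\mathcal{I}_{2s}(\psi)$ with $\psi\in C_c^\infty$, $\psi\geq 0$ — by a density/approximation argument using $u\in L_s(\mathbb{R}^n)$, the representation \eqref{represent}, and Fubini (this is the same machinery already used in Section \ref{proof}). With such test functions the weak formulation reads
\begin{equation*}
\int_{\mathbb{R}^n}\psi\,u\,dx=\int_{\mathbb{R}^n}\mathcal{I}_{2s}(\psi)\,|\nabla u|^q\,dx+\int_{\mathbb{R}^n}\mathcal{I}_{2s}(\psi)\,d\omega.
\end{equation*}
Dropping the nonnegative gradient term, one gets $\int \mathcal{I}_{2s}(\psi)\,d\omega\leq \int \psi\,u\,dx$. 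Now I would choose $\psi=\psi_j$ so that $\mathcal{I}_{2s}(\psi_j)$ is bounded below by a fixed constant on a neighborhood of $K$ (using the capacity-zero condition to make $\|\psi_j\|$ in the relevant norm tend to zero, and the semigroup identity $\mathcal{I}_{2s}=\mathcal{I}_1\circ\mathcal{I}_{2s-1}$ to convert the $\mathrm{cap}_{2s-1,q'}$ information into a bound on $\mathcal{I}_{2s}(\psi_j)$). Passing to the limit on the right-hand side — here one uses $u\in W^{1,q}\subset L^{q^*}\subset$ suitable $L^r$, together with Hölder and $\|\psi_j\|\to 0$ — yields $\int_K d\omega\leq C\liminf_j \|\psi_j\|_{\text{rel}}\,\|u\|_{\text{rel}}=0$, hence $\omega(K)=0$.

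The main obstacle I anticipate is the bookkeeping of exponents and capacities in the last step: one must match the order of the potential ($2s$ versus $2s-1$), the integrability of $u$ coming only from $W^{1,q}(\mathbb{R}^n)$ (so $u\in L^{q^*}$ with $q^*=nq/(n-q)$ when $q<n$, and the supercritical hypothesis $q>p^*$ must be used to keep all exponents in the admissible range), and the correct dual exponent $q'$ appearing in $\mathrm{cap}_{2s-1,q'}$. Concretely, one needs the pairing $\int \mathcal{I}_{2s}(\psi_j)\,u\,dx$ to be controlled by a product of a norm of $\psi_j$ that tends to zero and a finite norm of $u$; getting this requires writing $\mathcal{I}_{2s}(\psi_j)=\mathcal{I}_1(\mathcal{I}_{2s-1}(\psi_j))$, applying a mapping property of $\mathcal{I}_1$ (or moving the $\mathcal{I}_1$ onto $u$ and using that $\mathcal{I}_1 u$ inherits good integrability from $\nabla u\in L^q$ via the representation $u=\mathcal{I}_{2s}(\cdots)$), and then invoking the $\mathrm{cap}_{2s-1,q'}$-definition \eqref{cap} to extract a minimizing sequence $\rho_j=\mathcal{I}_{2s-1}(\psi_j)$ with vanishing $L^{q'}$-norm. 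A secondary technical point is justifying the enlarged test-function class and the use of Fubini/Tonelli for the (a priori only locally integrable) terms, which I would handle by truncation and monotone convergence exactly as in the proof of Theorem \ref{Theo1}.
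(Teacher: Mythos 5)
Your proposal is pointed in the right direction, but the central step of your main plan has a genuine gap. You want to produce $\psi_j\ge 0$ with $\mathcal{I}_{2s}(\psi_j)$ bounded below by a constant near $K$, arguing that $\mathrm{cap}_{2s-1,q'}(K)=0$ together with the semigroup identity $\mathcal{I}_{2s}=\mathcal{I}_1\circ\mathcal{I}_{2s-1}$ delivers this. It does not. The capacity condition gives $\rho_j\in L^{q'}_+$ with $\|\rho_j\|_{q'}\to 0$ and $\mathcal{I}_{2s-1}(\rho_j)\ge\chi_K$; but then $\mathcal{I}_{2s}(\rho_j)\ge\mathcal{I}_1(\chi_K)$, and since $\mathrm{cap}_{2s-1,q'}(K)=0$ forces $|K|=0$, the function $\mathcal{I}_1(\chi_K)$ vanishes a.e.\ and provides no lower bound. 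The object that \emph{is} uniformly bounded below on $K$ is $\mathcal{I}_{2s-1}(\rho_j)$ itself, not $\mathcal{I}_{2s}(\rho_j)$; i.e.\ the natural test function carries the order $2s-1$, leaving a leftover factor of $(-\Delta)^{1/2}$ that must land on $u$. Your parenthetical remark (``moving the $\mathcal{I}_1$ onto $u$'') is precisely the missing ingredient, and is where the real content is; without making it precise, the plan as written collapses at the order-matching step you yourself flag as the ``main obstacle.''

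The paper's proof makes this precise and is cleaner than the Riesz-potential-test-function route. It tests directly against smooth $\varphi\ge\chi_E$, drops the nonnegative gradient term, approximates $u$ in $W^{1,q}(\mathbb{R}^n)$ by $u_k\in C_0^\infty$, and applies the fractional integration by parts
$\int u_k(-\Delta)^s\varphi = \int(-\Delta)^{1/2}u_k\,(-\Delta)^{(2s-1)/2}\varphi$ (Lemma 2.2 of \cite{CV1}). Passing to the limit using the boundedness of $(-\Delta)^{1/2}\colon W^{1,q}(\mathbb{R}^n)\to L^q(\mathbb{R}^n)$ and H\"older's inequality yields
$\omega(E)\le C\|\nabla u\|_{L^q}\,\|\varphi\|_{\mathcal{L}^p_{2s-1}}$ with $p=q'$. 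Then, because $E$ is compact, $\mathrm{cap}_{2s-1,q'}(E)=0$ is equivalent to vanishing of the Bessel capacity $C_{2s-1,p}(E)$ (\cite[Prop.\ 5.1.4(b)]{AH}), which produces nonnegative smooth $\varphi_k\ge\chi_E$ with $\|\varphi_k\|_{\mathcal{L}^p_{2s-1}}\to 0$, and letting $k\to\infty$ gives $\omega(E)=0$. Three concrete pieces you would need to supply to close your argument are: the splitting $(-\Delta)^s=(-\Delta)^{1/2}\circ(-\Delta)^{(2s-1)/2}$ on the approximating $u_k$; the mapping property $(-\Delta)^{1/2}\colon W^{1,q}\to L^q$; and the passage from Riesz to Bessel capacity on compacts in order to extract the minimizing smooth test functions dominating $\chi_E$.
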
\begin{remark}\label{rmk converse} In other words, Theorem \ref{partial converse} says that a necessary condition for existence of solutions is that $\omega$ is absolutely continuous with respect to the Riesz capacity. A direct consequence of this fact is that no global solution $u$ in $ W^{1, q}(\mathbb{R}^{n})$ exists if $q \leq  p^{*}$ (see \cite[Propostition 2.6.1]{AH}).\end{remark} 
		
		In the rest of this section, we shall provide some consequences of Theorem \ref{Theo1}.

		\begin{proposition}\label{Coro} The solution from Theorem \ref{Theo1} satisfies
\begin{equation}\label{est 1 u}
\mathcal{I}_{2s}(\omega) \leq u \leq C\mathcal{I}_{2s}(\omega)
\end{equation}		
		and
		\begin{equation}\label{est grad u}
		|\nabla u|\leq C\mathcal{I}_{2s-1}(\omega).
		\end{equation}As a result, if $q> n/(n-2s)$, we have $u \in W^{1, q}(\mathbb{R}^{n}).$
		\end{proposition}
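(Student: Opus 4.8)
The plan is to deduce all four assertions from the representation formula \eqref{represent}, together with the behaviour of Riesz potentials under differentiation and composition and the nonlinear estimate \eqref{ineqpot}.

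I would first dispose of the lower bound in \eqref{est 1 u}: by \eqref{represent} and the linearity and positivity of $\mathcal{I}_{2s}$, $u=\mathcal{I}_{2s}(|\nabla u|^{q})+\mathcal{I}_{2s}(\omega)\ge \mathcal{I}_{2s}(\omega)$ since $\mathcal{I}_{2s}(|\nabla u|^{q})\ge 0$. Next, for the gradient estimate \eqref{est grad u}, I would differentiate the convolution $u=I_{2s}*(|\nabla u|^{q}+\omega)$ and use that $|\nabla I_{2s}(x)|\le c\,|x|^{-(n-2s+1)}=c'\,I_{2s-1}(x)$ (this is where $s>1/2$ enters, so that $0<2s-1<n$ and $I_{2s-1}$ is a bona fide Riesz kernel), which gives the pointwise bound $|\nabla u|\le C(\mathcal{I}_{2s-1}(|\nabla u|^{q})+\mathcal{I}_{2s-1}(\omega))$. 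To absorb the nonlinear term and arrive at \eqref{est grad u} I would invoke the approximation scheme used to construct $u$ in the proof of Theorem \ref{Theo1}: the iterates obey a uniform bound $|\nabla u_{k}|\le \Lambda\,\mathcal{I}_{2s-1}(\omega)$ that is propagated precisely by \eqref{ineqpot}, since $|\nabla u_k|\le\Lambda\,\mathcal{I}_{2s-1}(\omega)$ yields $\mathcal{I}_{2s-1}(|\nabla u_k|^{q})\le\Lambda^{q}\mathcal{I}_{2s-1}([\mathcal{I}_{2s-1}(\omega)]^{q})\le\Lambda^{q}C_{1}\,\mathcal{I}_{2s-1}(\omega)$, and the smallness of $C=C(n,q,s)$ in \eqref{cap1} (hence of $C_1$) is exactly what makes the constant close up; passing to the a.e.\ limit of $\nabla u_k$ then gives \eqref{est grad u}.

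For the upper bound in \eqref{est 1 u} I would combine \eqref{represent}, \eqref{est grad u}, the semigroup identity $\mathcal{I}_{2s}=\mathcal{I}_{1}\circ\mathcal{I}_{2s-1}$ (legitimate because $1+(2s-1)=2s<n$), and \eqref{ineqpot}:
\begin{align*}
\mathcal{I}_{2s}(|\nabla u|^{q})
&\le C^{q}\,\mathcal{I}_{2s}\big([\mathcal{I}_{2s-1}(\omega)]^{q}\big)
= C^{q}\,\mathcal{I}_{1}\big(\mathcal{I}_{2s-1}([\mathcal{I}_{2s-1}(\omega)]^{q})\big)\\
&\le C^{q}C_{1}\,\mathcal{I}_{1}(\mathcal{I}_{2s-1}(\omega))
= C^{q}C_{1}\,\mathcal{I}_{2s}(\omega),
\end{align*}
using the positivity of $\mathcal{I}_{1}$ in the second line. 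Adding $\mathcal{I}_{2s}(\omega)$ and using \eqref{represent} gives $u\le(1+C^{q}C_{1})\,\mathcal{I}_{2s}(\omega)$, which is \eqref{est 1 u} after renaming constants.

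Finally, for the $W^{1,q}(\mathbb{R}^{n})$-regularity when $q>n/(n-2s)$: Theorem \ref{Theo1} already provides $u\in W^{1,q}_{\textnormal{loc}}(\mathbb{R}^{n})$, so only integrability near infinity must be checked. Since $\omega$ is a finite measure supported in some ball $B_{R/2}$, one has $\mathcal{I}_{2s}(\omega)(x)\le c\,|x|^{-(n-2s)}$ and $\mathcal{I}_{2s-1}(\omega)(x)\le c\,|x|^{-(n-2s+1)}$ for $|x|\ge R$; hence by \eqref{est 1 u} and \eqref{est grad u} the integral $\int_{|x|\ge R}u^{q}\,dx$ is finite exactly when $q(n-2s)>n$, i.e.\ $q>n/(n-2s)$, while $\int_{|x|\ge R}|\nabla u|^{q}\,dx$ is finite because $q>p^{*}=n/(n-2s+1)$. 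Together with the local regularity this yields $u\in W^{1,q}(\mathbb{R}^{n})$. The main obstacle is the gradient estimate \eqref{est grad u}: the pointwise differentiation only bounds $|\nabla u|$ in terms of $\mathcal{I}_{2s-1}(|\nabla u|^{q})+\mathcal{I}_{2s-1}(\omega)$, and turning this into a bound purely in terms of $\mathcal{I}_{2s-1}(\omega)$ genuinely needs the quantitative smallness built into \eqref{ineqpot} and must be carried out along the construction of Theorem \ref{Theo1} (including the a.e.\ convergence of the gradients); the remaining steps are routine manipulations of positive convolution operators.
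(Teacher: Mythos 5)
Your proof is correct and matches the paper's approach: the upper bound in \eqref{est 1 u} comes from applying $\mathcal{I}_1$ to \eqref{ineqpot} and the semigroup identity $\mathcal{I}_{2s}=\mathcal{I}_1\circ\mathcal{I}_{2s-1}$, the gradient bound \eqref{est grad u} is lifted from the iterates of the construction of $u$ (the paper's \eqref{to represent}), and global $L^q$-integrability follows from the decay of the Riesz potentials of $\omega$ at infinity when $q>n/(n-2s)$. The only cosmetic difference is that for the tail integrals the paper uses Minkowski's inequality whereas you use the pointwise kernel bounds $\mathcal{I}_{2s}(\omega)(x)\lesssim |x|^{-(n-2s)}$ and $\mathcal{I}_{2s-1}(\omega)(x)\lesssim |x|^{-(n-2s+1)}$, which is an equivalent alternative.
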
\begin{proof}
		 Observe that the estimate \eqref{est 1 u} is obtained as follows. From \eqref{estimate 2} we have
		\begin{equation}\label{estimate 23}
	u\leq C\left( \mathcal{I}_{2s}\omega + \mathcal{I}_{2s}\left[ \mathcal{I}_{2s-1}\omega\right]^q \right)
	\end{equation}Now, applying $\mathcal{I}_1$ in  both sides of \eqref{ineqpot} yields
	$$\mathcal{I}_{2s}\left( \left[ \mathcal{I}_{2s-1}(\omega)\right] ^q\right)\leq C \mathcal{I}_{2s}(\omega)$$Plugging this inequality into \eqref{estimate 23} gives \eqref{est 1 u}. The lower bound in \eqref{est 1 u} is a consequence of \eqref{represent}. The estimate for the gradient \eqref{est grad u} follows from \eqref{to represent}.  Next, we prove the final assertion. Observe $\mathcal{I}_{2s}(\omega)\in L^{q}(\mathbb{R}^{n})$ for $q> n/(n-2s)$. Indeed,  for any $R>0$ so that $\textnormal{supp}(\omega) \subset B_R$ we have
\begin{equation}\label{for the pf}
\int_{B_R}\mathcal{I}_{2s}(\omega)^{q}dx \leq C(n,s,q,R)\int_{B_R}\mathcal{I}_{2s-1}(\omega)^{q}dx < \infty \qquad \text{by Lemma } \ref{regriesz},
\end{equation}	and, on the other hand, for large $R$ and $q> n/(n-2s)$ it follows by Minkowski's inequality
\begin{equation*}
\begin{split}
\int_{\mathbb{R}^{n}\setminus B_R}\mathcal{I}_{2s}(\omega)^{q}dx &\leq C\left( \int_{\text{supp }\omega}\left[\int_{\mathbb{R}^{n}\setminus B_R}\frac{dx}{|x-y|^{q(n-2s)}}\right]^{1/q}d\omega(y)\right) ^q\\ & \leq C\left( \int_{\text{supp }\omega}\left[\int_{\mathbb{R}^{n}\setminus B_R}\frac{dx}{|x|^{q(n-2s)}}\right]^{1/q}d\omega(y)\right) ^q< \infty
\end{split}
\end{equation*}
Consequently by \eqref{est 1 u} $u \in L^{q}(\mathbb{R}^{n})$. Moreover,  by Remark \ref{regI2s-1w} and \eqref{est grad u}, it follows that $\nabla u \in L^{q}(\mathbb{R}^{n})$. Hence $u \in W^{1, q}(\mathbb{R}^{n}).$\end{proof}

The next corollary shows that $u$ from \eqref{represent} is actually a solution of \eqref{problem}.

\begin{corollary}We have the following for the  solution $u$ from Theorem \ref{Theo1}:
\begin{enumerate}
\item[(i)] $u$ vanishes at infinite
$$\lim_{|x|\to \infty}u(x)=0;$$
\item[(ii)] $u$ is positive everywhere.
\end{enumerate}
\end{corollary}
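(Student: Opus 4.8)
The final statement to prove is the Corollary asserting that the solution $u$ from Theorem \ref{Theo1} vanishes at infinity and is positive everywhere.

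\medskip

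The plan is to exploit the representation formula $u = \mathcal{I}_{2s}(|\nabla u|^q + \omega)$ together with the two-sided bound $\mathcal{I}_{2s}(\omega) \leq u \leq C\,\mathcal{I}_{2s}(\omega)$ established in Proposition \ref{Coro}. Both claims then reduce to statements about the pure Riesz potential $\mathcal{I}_{2s}(\omega)$ of the compactly supported measure $\omega$.

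\medskip

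\textbf{Part (i): decay at infinity.} By the upper bound in \eqref{est 1 u} it suffices to show $\mathcal{I}_{2s}(\omega)(x) \to 0$ as $|x| \to \infty$. Since $\omega$ has compact support, say in $B_R$, for $|x| \geq 2R$ and $y \in \text{supp}(\omega)$ we have $|x - y| \geq |x| - R \geq |x|/2$, hence
$$
\mathcal{I}_{2s}(\omega)(x) = c(n,2s)\int_{B_R}\frac{d\omega(y)}{|x-y|^{n-2s}} \leq c(n,2s)\,\frac{2^{n-2s}}{|x|^{n-2s}}\,\omega(\mathbb{R}^n) \longrightarrow 0,
$$
using that $n > 2s$ and that $\omega$ is a finite measure (finiteness follows from the capacitary bound \eqref{cap1} applied to $E = B_R$). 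This also gives the quantitative decay rate $u(x) = O(|x|^{-(n-2s)})$.

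\medskip

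\textbf{Part (ii): strict positivity.} By the lower bound in \eqref{est 1 u}, $u \geq \mathcal{I}_{2s}(\omega)$, so it is enough to check $\mathcal{I}_{2s}(\omega)(x) > 0$ for every $x \in \mathbb{R}^n$. If $\omega \not\equiv 0$ this is immediate: the Riesz kernel $|x-y|^{-(n-2s)}$ is strictly positive and finite for $x \neq y$, so for any fixed $x$ the integral $\int_{\mathbb{R}^n}|x-y|^{-(n-2s)}\,d\omega(y)$ is a positive number (it cannot vanish since the integrand is positive $\omega$-a.e., as $\omega(\{x\}) = 0$ because singletons have Riesz capacity zero by \eqref{cap1}, and it is finite wherever $u$ is finite). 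One must dispose of the trivial case $\omega \equiv 0$: then the equation reads $(-\Delta)^s u = |\nabla u|^q$ with $u = \mathcal{I}_{2s}(|\nabla u|^q) \geq 0$, and either $|\nabla u|^q \equiv 0$ — forcing $u \equiv 0$, which contradicts the requirement $u > 0$ and should be excluded from the hypotheses or handled by remarking that the "solution" produced is the trivial one — or $|\nabla u|^q \not\equiv 0$ and the same positivity-of-Riesz-potential argument applies. The cleanest route is to note that the construction in the proof of Theorem \ref{Theo1} yields $u \geq \mathcal{I}_{2s}(\omega)$ with the understanding that genuine positivity is asserted precisely when $\omega$ is nontrivial; I would state the corollary accordingly.

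\medskip

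The main obstacle is essentially bookkeeping rather than depth: one must make sure the potentials involved are finite at the points under consideration (so that "positive" is not vacuous against "$+\infty$") and handle the degenerate case $\omega \equiv 0$ coherently with the strict inequality $u > 0$ in \eqref{problem}. Both are controlled by the capacitary hypothesis \eqref{cap1}, which forces $\omega$ to be finite and to vanish on sets of zero Riesz capacity, in particular on points.
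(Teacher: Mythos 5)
Your proof is correct and follows essentially the same route as the paper: both parts reduce to the pointwise bounds $\mathcal{I}_{2s}(\omega) \leq u \leq C\,\mathcal{I}_{2s}(\omega)$ from Proposition \ref{Coro}, with (i) deduced from $|x-y| \gtrsim |x|$ far from $\mathrm{supp}(\omega)$ and (ii) from strict positivity of the kernel on the compact support. The only addition is your explicit handling of the $\omega \equiv 0$ degeneracy, which the paper leaves implicit (its lower bound $c(n,2s)r^{2s-n}\omega(\mathrm{supp}\,\omega) > 0$ silently assumes $\omega \not\equiv 0$); this is a fair observation but not a different method.
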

\begin{proof}
Take $R >0$ so that $\textnormal{supp}(\omega) \subset B_R.$ Hence for $|x| > R$ we have by \eqref{est 1 u}
$$u(x) \leq C \mathcal{I}_{2s}(\omega)  = C\int_{B_R}\frac{d \omega(y)}{|x-y|^{n-2s}} \leq C\frac{\omega(B_R)}{\left( |x|-R\right)^{n-2s}}.$$This proves $(i)$. For $(ii)$, observe that for $x \in \mathbb{R}^{n}$ and $r>0$ so that  $\textnormal{supp}(\omega) \subset B_r(x)$ the lower bound in \eqref{est 1 u} implies
$$u(x) \geq c(n, 2s)\int_{B_r(x)} \frac{d\omega(y)}{|x-y|^{n-2s}}\geq c(n, 2s)r^{2s-n}\omega(\textnormal{supp}(\omega))>0.$$
\end{proof}

Observe that from Theorem \ref{Theo1},   $u \in W^{1, q}_{\text{loc}}(\mathbb{R}^{n})$ and thus  $u$ is finite almost everywhere. The following consequence of Proposition \ref{Coro} gives finiteness everywhere for $q > 2s$.\begin{corollary}Suppose that $q> 2s$. Then, $u < \infty$ everywhere in $\mathbb{R}^{n}$. 
\end{corollary}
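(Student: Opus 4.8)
The plan is to reduce the statement to the pointwise finiteness of a single Riesz potential. By the upper bound in \eqref{est 1 u} of Proposition \ref{Coro} we have $u(x)\le C\,\mathcal I_{2s}(\omega)(x)$ for every $x\in\mathbb R^n$, so it suffices to show $\mathcal I_{2s}(\omega)(x)<\infty$ for all $x$. Since $\omega$ is a compactly supported Radon measure it is finite, and for fixed $x$ the portion of $\mathcal I_{2s}(\omega)(x)=c(n,2s)\int_{\mathbb R^n}|x-y|^{2s-n}\,d\omega(y)$ over $\{|x-y|\ge 1\}$ is bounded by $c(n,2s)\,\omega(\mathbb R^n)<\infty$. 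Hence the question is local: one must control the near part $\int_{B_1(x)}|x-y|^{2s-n}\,d\omega(y)$.

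For the near part I would pass to the distribution function. With $\beta:=n-2s>0$, the layer-cake formula gives
\begin{equation*}
\int_{B_1(x)}\frac{d\omega(y)}{|x-y|^{\beta}}=\beta\int_0^1\omega\big(B_t(x)\big)\,t^{-\beta-1}\,dt+\omega\big(B_1(x)\big),
\end{equation*}
so the whole matter comes down to the convergence of $\int_0^1\omega(B_t(x))\,t^{-\beta-1}\,dt$. To bound $\omega(B_t(x))$ I would combine the hypothesis \eqref{cap1} with the classical scaling identity $\textnormal{cap}_{2s-1,q'}(B_t(x))=t^{\,n-(2s-1)q'}\,\textnormal{cap}_{2s-1,q'}(B_1)$ (see \cite{AH}); here $(2s-1)q'<n$ exactly because $q>p^{*}$, so $\textnormal{cap}_{2s-1,q'}(B_1)$ is a finite positive constant. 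This produces the Morrey-type estimate $\omega(B_t(x))\le C\,t^{\,n-(2s-1)q'}$, uniformly in $x$ and in $t\le 1$.

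Substituting this bound and simplifying exponents yields
\begin{equation*}
\beta\int_0^1\omega\big(B_t(x)\big)\,t^{-\beta-1}\,dt\le C\int_0^1 t^{\,2s-(2s-1)q'-1}\,dt,
\end{equation*}
and the right-hand side is finite if and only if $2s-(2s-1)q'>0$. A short computation with $q'=q/(q-1)$ shows that $2s>(2s-1)q'$ is equivalent to $q>2s$, which is precisely the hypothesis of the corollary. Therefore $\mathcal I_{2s}(\omega)(x)<\infty$ for every $x\in\mathbb R^n$, and by \eqref{est 1 u} so is $u(x)$.

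The only step that is not bookkeeping is the capacity estimate for balls, namely the scaling identity together with the finiteness (and positivity) of $\textnormal{cap}_{2s-1,q'}(B_1)$; this is standard. Once this Morrey bound on $\omega$ is available, the rest is the elementary exponent computation above, and I expect no further difficulty — in particular, the condition $q>2s$ is forced by, and only by, the convergence of the last integral.
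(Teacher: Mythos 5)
Your proof is correct and takes essentially the same route as the paper's: bound $u(x)\le C\,\mathcal I_{2s}(\omega)(x)$ by \eqref{est 1 u}, reduce pointwise finiteness to the convergence of $\int_0^{1}\omega(B_t(x))\,t^{2s-n-1}\,dt$, deduce the Morrey-type bound $\omega(B_t(x))\le C\,t^{\,n-(2s-1)q'}$ from hypothesis \eqref{cap1} together with the capacity of balls, and observe that the resulting exponent condition is exactly $q>2s$. The only cosmetic difference is that the paper obtains $\textnormal{cap}_{2s-1,q'}(B_t)\le C\,t^{\,n-(2s-1)q'}$ by constructing an explicit competitor $g\propto t^{1-2s}\chi_{B_t}$ in the definition of capacity, whereas you cite the exact scaling identity $\textnormal{cap}_{2s-1,q'}(B_t)=t^{\,n-(2s-1)q'}\textnormal{cap}_{2s-1,q'}(B_1)$ from \cite{AH} (valid since $(2s-1)q'<n$, which is equivalent to $q>p^*$); both deliver the same ball estimate, and the ensuing exponent bookkeeping is identical.
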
\begin{proof}
	By \eqref{est 1 u} and the Cavalieri's representation of Riesz potentials (see for instance Section 2.2 in \cite{Ponce}), we have for $x \in \mathbb{R}^{n}$
	\begin{equation}\label{finiteness u}
	u(x)\leq C \mathcal{I}_{2s}(\omega)(x) = C \int_0^{\infty}\frac{\omega(B_r(x))}{r^{n-2s+1}}dr= C \int_0^{R(x)}\frac{\omega(B_r(x))}{r^{n-2s+1}}dr,
	\end{equation}where $R(x)>0$ satisfies supp $\omega \subset B(x, R(x))$. We now prove that
	\begin{equation}\label{ineqomegacap}
	\omega(B_r(x))\leq C_2(n,s,q)r^{n-(2s-1) p}.
	\end{equation}For $0 <\alpha < n$, take 		
	$$g=\frac{2^{n-\alpha}}{c(n,\alpha)\omega_n r^\alpha}\mathcal{X}_{B_r(x)}\in L^p_+(\mathbb{R}^n).$$Now, let $z\in B_r(x)$, then
	\begin{equation*}
	\begin{split}
	\mathcal{I}_\alpha g(z)&=c(n,\alpha)\int_{\mathbb{R}^n}\frac{1}{|z-y|^{n-\alpha}}\frac{2^{n-\alpha}}{c(n,\alpha)\omega_n r^\alpha}\mathcal{X}_{B_r(x)}(y)\;dy \\& = \frac{2^{n-\alpha}}{\omega_n r^\alpha}\int_{B_r(x)}\frac{1}{|z-y|^{n-\alpha}}\;dy\\& = 1.
	\end{split}
	\end{equation*}Hence $\mathcal{I}_\alpha g\geq \mathcal{X}_{B_r(x)}$. Therefore,
	\begin{equation*}
	\begin{split}
	\textnormal{cap}_{\alpha,p}(B_r(x))&\leq \|g\|_{L^p(\mathbb{R}^n)}^p\\& = \int_{\mathbb{R}^n}\left( \frac{2^{n-\alpha}}{c(n,\alpha)\omega_n r^\alpha}\right)^p\mathcal{X}_B(y)^p\;dy \\ & = \left( \frac{2^{n-\alpha}}{c(n,\alpha)}\right)^p \omega_n^{1-p}r^{n-\alpha p}.
	\end{split}
	\end{equation*}Thus
	\begin{equation*}
	\textnormal{cap}_{\alpha,p}(B)\leq Cr^{n-\alpha p}, \quad C=\left( \frac{2^{n-\alpha}}{c(n,\alpha)}\right)^p \omega_n^{1-p}.
	\end{equation*}Letting $\alpha = 2s$ and  recalling \eqref{cap1},  we obtain \eqref{ineqomegacap}. Plugging this into \eqref{finiteness u} gives that $u(x)$ is finite (observe that $q > 2s$ implies $p \in (1, 2s/(2s-1)$ and so the last integral in \eqref{finiteness u} is finite).
\end{proof}

The following results show how the regularity of the source is transferred into the regularity of the solution. We start with measure data and then we analyse the regularity of solutions for integrable sources.

To state estimates of the solution in terms of measures, we first recall the definition of the  Marcinkiewicz spaces.
\begin{definition}Let $\Omega \subset \mathbb{R}^{n}$ be a domain and $\mu$ be a positive Borel measure in $\Omega$. For $\kappa > 1$, $\kappa' = \kappa /(\kappa -1)$  we define the Marcinkiewicz space  $M^{\kappa}(\Omega, d \mu)$ of exponent $\kappa$ or weak $L^{\kappa}$-space, as
$$M^{\kappa}(\Omega, d \mu):= \left\lbrace v \in L^{1}_{\text{loc}}(\Omega, d \mu): \|v\|_{M^{\kappa}(\Omega, d \mu)}< \infty \right\rbrace$$where
$$\|v\|_{M^{\kappa}(\Omega, d \mu)}:=\inf\left\lbrace c \in [0, \infty]: \int_E|v|d \mu \leq c \left(\int_E d\mu \right)^{\frac{1}{\kappa'}}, \text{ for all Borel }E \subset \Omega \right\rbrace.$$  \end{definition}The Marcinkiewicz type estimate of the solution $u$ is the following (recall $p^{*}=n/(n-2s+1)$). 
\begin{proposition}There exists $C=C(n, s)>0$ so that
\begin{equation}
\|u\|_{M^{n/(n-2s)}(\mathbb{R}^{n}, d\mu)}+ \|\nabla u\|_{M^{p^{*}}(\mathbb{R}^{n}, d\mu)} \leq C \|\omega\|_{\mathcal{M}^{b}(\mathbb{R}^{n})},
\end{equation}where
$$d\mu(x):= \dfrac{dx}{1+|x|^{n+2s}}$$and $$\|\omega\|_{\mathcal{M}^{b}(\mathbb{R}^{n})}= \omega(\mathbb{R}^{n})$$is a norm in the space of bounded Radon measures $\mathcal{M}^{b}(\mathbb{R}^{n})$ of $\mathbb{R}^{n}$.
\end{proposition}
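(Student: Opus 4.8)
The plan is to bound the Marcinkiewicz norms of $u$ and $\nabla u$ by the corresponding norms of their controlling Riesz potentials, using the pointwise estimates \eqref{est 1 u} and \eqref{est grad u} from Proposition \ref{Coro}, namely $u \leq C\mathcal{I}_{2s}(\omega)$ and $|\nabla u| \leq C\mathcal{I}_{2s-1}(\omega)$. Since $M^{\kappa}(\mathbb{R}^{n}, d\mu)$-norms are monotone under pointwise domination of nonnegative functions, it suffices to establish
\begin{equation*}
\|\mathcal{I}_{2s}(\omega)\|_{M^{n/(n-2s)}(\mathbb{R}^{n}, d\mu)} + \|\mathcal{I}_{2s-1}(\omega)\|_{M^{p^{*}}(\mathbb{R}^{n}, d\mu)} \leq C\, \omega(\mathbb{R}^{n}).
\end{equation*}
I would treat the two terms separately but by the same mechanism, recalling that $p^{*} = n/(n - 2s + 1)$ is exactly the Marcinkiewicz exponent associated to the Riesz kernel $I_{2s-1}$ of order $2s-1$, and $n/(n-2s)$ the one associated to $I_{2s}$.

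First I would record the classical weak-type bound for Riesz potentials of a bounded measure: for $0 < \alpha < n$, the potential $\mathcal{I}_{\alpha}(\omega)$ lies in weak-$L^{n/(n-\alpha)}(\mathbb{R}^{n})$ with
\begin{equation*}
\left|\{x : \mathcal{I}_{\alpha}(\omega)(x) > \lambda\}\right| \leq C(n,\alpha)\left(\frac{\omega(\mathbb{R}^{n})}{\lambda}\right)^{\frac{n}{n-\alpha}}, \qquad \lambda > 0.
\end{equation*}
This is standard (it follows from the layer-cake/Cavalieri representation $\mathcal{I}_{\alpha}(\omega)(x) = c\int_0^{\infty} \omega(B_r(x))\, r^{\alpha - n - 1}\, dr$ together with the trivial bound $\omega(B_r(x)) \leq \omega(\mathbb{R}^{n})$; see, e.g., \cite[Section 2.2]{Ponce} or \cite{AH}). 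Applying this with $\alpha = 2s$ and $\alpha = 2s-1$ gives membership of $\mathcal{I}_{2s}(\omega)$ and $\mathcal{I}_{2s-1}(\omega)$ in the weak-Lebesgue spaces $M^{n/(n-2s)}(\mathbb{R}^{n}, dx)$ and $M^{p^{*}}(\mathbb{R}^{n}, dx)$ respectively, with the stated control by $\omega(\mathbb{R}^{n})$, where here $M^{\kappa}(\mathbb{R}^{n}, dx)$ is the usual weak-$L^{\kappa}$ space with respect to Lebesgue measure.

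Next I would pass from Lebesgue measure to the finite measure $d\mu(x) = (1 + |x|^{n+2s})^{-1}dx$. Since $d\mu \leq dx$, for any Borel set $E$ and any nonnegative $v$ we have $\int_E v\, d\mu \leq \int_E v\, dx$; but that alone does not immediately compare the two Marcinkiewicz norms because the normalizing factor on the right of the defining inequality uses $\mu(E)$, which is smaller than $|E|$. The clean way is to use the equivalent formulation of the Marcinkiewicz norm in terms of distribution functions: $\|v\|_{M^{\kappa}(\Omega, d\mu)}^{\kappa} \simeq \sup_{\lambda > 0}\lambda^{\kappa}\,\mu(\{|v| > \lambda\})$. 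Then $\mu(\{\mathcal{I}_{\alpha}(\omega) > \lambda\}) \leq |\{\mathcal{I}_{\alpha}(\omega) > \lambda\}|$ and the weak-type estimate above directly yields $\sup_\lambda \lambda^{n/(n-\alpha)}\mu(\{\mathcal{I}_\alpha(\omega) > \lambda\}) \leq C\,\omega(\mathbb{R}^n)^{n/(n-\alpha)}$, hence the desired $d\mu$-Marcinkiewicz bound. (One should remark that with $d\mu$ being finite, $M^{\kappa}(\mathbb{R}^n, d\mu) \subset L^1_{\mathrm{loc}}(\mathbb{R}^n, d\mu)$, so the norm is well-defined and finite.) Combining with \eqref{est 1 u} and \eqref{est grad u} gives the proposition with $C$ depending only on $n$ and $s$.

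The main obstacle is the bookkeeping around the two equivalent definitions of the Marcinkiewicz norm — showing that the "for all Borel $E$" definition used in the paper is comparable (up to dimensional constants absorbed into $C$) to the distribution-function definition $\sup_\lambda \lambda^\kappa \mu(\{|v|>\lambda\})$ — and making sure the transition from $dx$ to $d\mu$ is done on the side (distribution functions) where the inequality $\mu \leq \mathrm{Leb}$ is actually helpful; a direct attempt using the integral definition with $\mu(E)^{1/\kappa'}$ on the right is awkward since it goes the wrong way. Everything else is a routine invocation of the classical weak-$(1,\,n/(n-\alpha))$ boundedness of $\mathcal{I}_\alpha$ on measures together with the pointwise control of the solution already furnished by Proposition \ref{Coro}.
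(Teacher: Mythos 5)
Your argument is correct, and it reaches the same conclusion by a somewhat different route than the paper. The paper proves the bound self-containedly at the kernel level: it fixes $y$, splits $\int_E I_{2s-1}(x-y)\,d\mu(x)$ by the level set $A_\lambda(y)=\{I_{2s-1}(\cdot-y)>\lambda\}$, controls $\mu(A_\lambda(y))\le|A_\lambda(y)|\le C\lambda^{-p^*}$, optimizes in $\lambda$, and then integrates in $d\omega(y)$; it never leaves the ``for all Borel $E$'' definition of the Marcinkiewicz norm. You instead (i) cite the classical weak-$(1,\,n/(n-\alpha))$ boundedness of $\mathcal{I}_\alpha$ acting on finite measures, (ii) pass from $dx$ to $d\mu$ by the elementary observation $\mu(A)\le|A|$, and (iii) translate the distribution-function bound back into the paper's integral definition via the standard equivalence $\|v\|_{M^\kappa(d\mu)}\simeq\sup_\lambda\lambda\,\mu(\{|v|>\lambda\})^{1/\kappa}$ valid for $\kappa>1$. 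The underlying mechanism is the same (both are a weak-type estimate plus monotonicity $\mu\le\mathrm{Leb}$), but your version is more modular: it reduces the proposition to two citable facts and makes the role of the finiteness of $\mu$ transparent. One small point worth flagging explicitly in a write-up is that the two-sided equivalence of the Marcinkiewicz norms needs $\kappa>1$ and costs a constant of the form $\kappa/(\kappa-1)$, which here depends only on $n,s$, so it is safely absorbed into $C(n,s)$; your remark that the direct inequality ``$\mu\le\mathrm{Leb}$'' goes the wrong way in the integral definition and must be applied on the distribution-function side is exactly the right caution.
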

\begin{proof}We follow closely the proof of \cite[Proposition 2.2]{CV1}. We prove the estimates for $\nabla u$. By similar arguments, the control for $\|u\|_{M^{n/(n-2s)}(\mathbb{R}^{n}, d\mu)}$ is obtained. Observe that, in view of \eqref{est grad u},  it is enough to prove that there is $C>0$ with
\begin{equation}\label{est riesz 2s-1}
 \|\mathcal{I}_{2s-1}(\omega)\|_{M^{n/(n-2s+1)}(\mathbb{R}^{n}, d\mu)} \leq C \|\omega\|_{\mathcal{M}^{b}(\mathbb{R}^{n})}.
\end{equation}For $y \in \mathbb{R}^{n}$ and $\lambda >0$, define
$$A_{\lambda}(y):=\left\lbrace x\in \mathbb{R}^n\setminus \left\lbrace y \right\rbrace: I_{2s-1}(x-y)=\frac{c(n, 2s)}{|x-y|^{n-2s+1}} > \lambda \right\rbrace, \quad m_\lambda(y):=\int_{A_\lambda(y)}\frac{1}{1+|x|^{n+2s}}dx.$$Since
$$A_\lambda \subset B_r(y),$$with $$r=\left[\dfrac{c(n, 2s)}{\lambda}\right]^{1/(n-2s+1)},$$ we have for some $C>0$
$$m_\lambda(y) \leq C \lambda^{-p^{*}}.$$Let now $E$ be a Borel set.  Then
$$\int_E I_{2s-1}(x-y)d\mu(x) \leq \lambda \int_E d \mu(x)+ \int_{A_\lambda(y)}I_{2s-1}(x-y)d\mu(x)$$and
$$\int_{A_\lambda(y)}I_{2s-1}(x-y)d\mu(x) = \lambda m_\lambda(y)+ \int_\lambda^{\infty}m_s(y)ds \leq C \lambda^{1-p^{*}},$$for some $C>0$.  Hence
$$\int_E I_{2s-1}(x-y)d\mu(x) \leq \lambda \int_E d \mu(x)+ C \lambda^{1-p^{*}}.$$Choosing $\lambda = (\int_E d \mu)^{-1/p^{*}}$, we obtain
$$\int_E I_{2s-1}(x-y)d\mu(x) \leq C\left(\int_E d\mu \right)^{\frac{p^{*}-1}{p^{*}}},$$for a universal constant $C>0$ and all $y \in \mathbb{R}^{n}.$ As a result
$$\int_E \mathcal{I}_{2s-1}(\omega)(x)d\mu(x) \leq C \|\omega \|_{\mathcal{M}^{b}(\mathbb{R}^{n})}\left( \int_E d\mu\right)^{\frac{p^{*}-1}{p^{*}}}.$$This ends the proof of the proposition. 
\end{proof}


In the next proposition, we provide a complete scheme of Lebesgue, Sobolev and H\"{o}lder regularity of $u$ in terms of the regularity of the source.

\begin{proposition}
Let  $\omega= f dx$, where $f\in L^{m}_{+}(\mathbb{R}^{n})$ vanishes outside a compact set. Then for the  solution $u$ from Theorem \ref{Theo1} we get:
\begin{itemize}
\item[(i)] if $m=1$, then there is $C=C(s, n)>0$ so that  for any $\lambda> 0$
$$|\left\lbrace x \in \mathbb{R}^{n}: u(x) > \lambda\right\rbrace| \leq C\left(\dfrac{\|f\|_{L^{1}(\mathbb{R}^{n})}}{\lambda}\right)^{m*}, \qquad m^{*}= 1-\frac{2s}{n};$$
\item[(ii)] if $m >1 $ and $2sm < n$, there is a constant $C=C(r, s,n, m)>0$ so that
$$\|u\|_{L^{r}(\mathbb{R}^{n})} \leq C\|f\|_{L^{m}(\mathbb{R}^{n})}, \quad \text{ for all  }r \in \left( \frac{n}{n-2s}, m^{*}\right]$$where $$m^{*}= \frac{nm}{n-2sm};$$\item[(iii)] for $m >1 $ and $(2s-1)m < n$, for some $C=C(r, s,n, m)>0$ we have
$$\|\nabla u\|_{L^{r}(\mathbb{R}^{n})} \leq C\|f\|_{L^{m}(\mathbb{R}^{n})}, \quad \text{ for all  }r \in \left( \frac{n}{n-2s+1}, m^{*}\right]$$where
$$m^{*}= \frac{nm}{n-(2s-1)m};$$\item[(iv)] if $m=n/(2s)$, then $$u \in L^{r}(\mathbb{R}^{n})\quad \text{ for all  }r \in \left( \frac{n}{n-2s}, \infty\right);$$

 
 \item[(v)]in the case $m=n/(2s-1)$,  $$u \in W^{1, r}(\mathbb{R}^{n})\quad \text{ for all  }r \in \left( \frac{n}{n-2s+1}, \infty\right)$$

\item[(vi)] if $m > n/(2s-1)$, then $$\nabla u \in \mathcal{C}^{0, \gamma}(\mathbb{R}^{n}), \quad\gamma = 2s-1 - \frac{n}{m}.$$  
\end{itemize}
\end{proposition}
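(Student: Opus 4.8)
The plan is to read off every assertion from the two pointwise controls of Proposition \ref{Coro}, namely $\mathcal{I}_{2s}(\omega)\le u\le C\,\mathcal{I}_{2s}(\omega)$ and $|\nabla u|\le C\,\mathcal{I}_{2s-1}(\omega)$, together with $\omega=f\,dx$, $f\ge 0$, $f\in L^{m}$, $\operatorname{supp}f\subset B_{R}$. Thus the whole statement reduces to the classical mapping properties of the Riesz potentials $\mathcal{I}_{\alpha}$ (with $\alpha=2s$ for $u$ and $\alpha=2s-1$ for $\nabla u$) applied to a compactly supported $L^{m}$ density. The one structural fact I will use repeatedly is the two-sided tail estimate
\[
c\,\|f\|_{L^{1}}\,|x|^{-(n-\alpha)}\ \le\ \mathcal{I}_{\alpha}(f)(x)\ \le\ C\,\|f\|_{L^{1}}\,|x|^{-(n-\alpha)},\qquad |x|\ge 2R,
\]
(recall $\|f\|_{L^{1}}\le|B_{R}|^{1-1/m}\|f\|_{L^{m}}$), which shows $\mathcal{I}_{\alpha}(f)\in L^{r}(\mathbb{R}^{n}\setminus B_{2R})$ precisely when $r(n-\alpha)>n$; the lower bound here, combined with the lower estimate $u\ge\mathcal{I}_{2s}(\omega)$ in \eqref{est 1 u}, also shows that the thresholds $n/(n-2s)$ and $p^{*}=n/(n-2s+1)$ appearing below are sharp for $u$ and $\nabla u$.

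For the behaviour on $B_{R}$ one invokes, according to the size of $m$: the weak-type endpoint bound $\mathcal{I}_{\alpha}\colon L^{1}\to L^{n/(n-\alpha),\infty}$, giving the distributional estimate (i); the Hardy--Littlewood--Sobolev inequality $\mathcal{I}_{\alpha}\colon L^{m}\to L^{nm/(n-\alpha m)}$ when $1<m<n/\alpha$, which (with $\alpha=2s$, resp.\ $\alpha=2s-1$) produces the top exponent $m^{*}$ of (ii) and (iii); and, when $m=n/\alpha$, the observation that an $L^{n/\alpha}$ density supported in $B_{R}$ also lies in $L^{m'}$ for every $m'<n/\alpha$, so that $\mathcal{I}_{\alpha}(f)\in L^{r}_{\mathrm{loc}}$ for every finite $r$, giving (iv) and the gradient part of (v). In each case one splits $\mathbb{R}^{n}=B_{2R}\cup(\mathbb{R}^{n}\setminus B_{2R})$: on the ball the local integrability just recorded together with H\"older's inequality covers all $r$ up to $m^{*}$, while on the complement the tail estimate covers exactly $r>n/(n-\alpha)$; intersecting gives the stated ranges, and $u\le C\mathcal{I}_{2s}(f)$, $|\nabla u|\le C\mathcal{I}_{2s-1}(f)$ then transfer the conclusion to $u$ and $\nabla u$. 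For the $u$-component of (v) it is enough that $\mathcal{I}_{2s}(f)$ is bounded (here $2sm>n$) together with the same tail estimate.

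The only item requiring more than Proposition \ref{Coro} is (vi): a pointwise bound on $|\nabla u|$ does not by itself render the vector field $\nabla u$ H\"older continuous. Here I would differentiate the representation \eqref{represent}. Put $g:=|\nabla u|^{q}+f$. Using \eqref{est grad u} and the case for $u$ already settled, $m>n/(2s-1)$ forces $\mathcal{I}_{2s-1}(f)\in L^{\infty}$, hence $\nabla u\in L^{\infty}$; together with $|\nabla u|(x)\le C|x|^{-(n-2s+1)}$ for large $|x|$ and the supercriticality $q>p^{*}$ (so that $q(n-2s+1)>n$), this gives $|\nabla u|^{q}\in L^{1}\cap L^{\infty}$, hence $g\in L^{1}\cap L^{\infty}$. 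Then $\nabla u=\nabla I_{2s}\ast g=\nabla I_{2s}\ast(|\nabla u|^{q})+\nabla I_{2s}\ast f$ almost everywhere, and $\nabla I_{2s}$ is a Riesz-type kernel of order $2s-1$. The nonlinear term $\nabla I_{2s}\ast(|\nabla u|^{q})$ belongs to $C^{0,\beta}(\mathbb{R}^{n})$ for every $\beta<2s-1$ (a Riesz kernel of order $2s-1$ convolved with an $L^{1}\cap L^{\infty}$ density), while the classical H\"older estimate for Riesz potentials of compactly supported $L^{m}$ functions with $(2s-1)m>n$ (see \cite[Ch.~3]{AH} or \cite{Ma}) gives $\nabla I_{2s}\ast f\in C^{0,\,2s-1-n/m}(\mathbb{R}^{n})$; and $2s-1-n/m<1$ since $s<1$. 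Summing the two contributions yields $\nabla u\in C^{0,\gamma}(\mathbb{R}^{n})$ with $\gamma=2s-1-n/m$, global (not merely local) H\"older continuity following from the boundedness and decay of $\nabla u$ at infinity.

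I expect the Hardy--Littlewood--Sobolev and weak-type bounds, the Lebesgue interpolation on $B_{2R}$, and the explicit tail integrals to make up the bulk of the write-up but to be entirely routine. The delicate part is confined to (vi): justifying that the distributional gradient of $u=\mathcal{I}_{2s}(g)$ is represented by $\nabla I_{2s}\ast g$, and the short bootstrap $\nabla u\in L^{\infty}\Rightarrow|\nabla u|^{q}\in L^{1}\cap L^{\infty}$, which is exactly where the compact support of $f$ and the hypothesis $q>p^{*}$ enter.
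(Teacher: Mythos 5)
Your proposal is correct and, for items (i)--(v), it coincides with what the paper does (Proposition \ref{Coro} reduces everything to mapping properties of $\mathcal{I}_{2s}$ and $\mathcal{I}_{2s-1}$ on compactly supported $L^m$ densities, treated via the weak-type endpoint, Hardy--Littlewood--Sobolev, H\"older interpolation on the support, and the tail estimate off a large ball). For (vi) you take a genuinely different route. The paper does not split the density: using \eqref{represent} it writes $\nabla u=\mathcal{I}_{2s-1}(|\nabla u|^q+f)$, invokes part (v) (applicable because $f\in L^m\cap L^{n/(2s-1)}$ by compact support) to get $u\in W^{1,r}$ for every $r>p^*$, hence $\nabla u\in L^{qm}$ and $|\nabla u|^q+f\in L^m$ with $(2s-1)m>n$, and then applies a single H\"older estimate for Riesz potentials (Mizuta, Thm.~2.2, Sec.~4.2) to the whole density. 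You instead bootstrap $\nabla u\in L^\infty$ from $\mathcal{I}_{2s-1}(f)\in L^\infty$, derive a pointwise tail bound $|\nabla u|\lesssim |x|^{-(n-2s+1)}$, use supercriticality $q>p^*$ to place $|\nabla u|^q$ in $L^1\cap L^\infty$, and then H\"older-estimate the linear and nonlinear contributions to $\nabla I_{2s}\ast g$ separately. Both are sound. The paper's version is slightly more economical (one appeal to (v), one H\"older estimate, no decay argument), while yours is arguably more transparent about why the nonlinear term is harmless and is more careful about the vector-valued nature of $\nabla I_{2s}\ast g$, which the paper glosses over with the scalar shorthand $\mathcal{I}_{2s-1}(\cdot)$.
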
\begin{proof}
The estimates $(i)-(iii)$ are direct consequences of Proposition \ref{Coro} and well-known $L^{p}$ embeddings of the Riesz potential (see for instance \cite[Chapter V]{St} and \cite[Chapter 3]{AH}). Also, $(iv)$ and $(v)$ follow from $(ii)$ and $(iii)$, respectively,  together with the assumption that $f$ has compact support.  Finally, for $(vi)$ we first observe that  from \eqref{represent}
$$\nabla u = \mathcal{I}_{2s-1}(|\nabla u|^{q}+ f) \quad a.e. \,\,\mathbb{R}^{n}.$$Moreover, $\nabla u \in L^{qm}(\mathbb{R}^{n})$ since by $(v)$, $u \in W^{1, r}(\mathbb{R}^{n})$ for all $r > p^{*}$ and $qm > p^{*}$.  Therefore, appealing to  \cite[Theorem 2.2, Sec. 4.2]{Mi}, there is a constant $M>0$ so that
$$|\nabla u(x)-\nabla u(y)| \leq M|x-y|^{2s-1 - n/m}\||\nabla u|^{q}+ f\|_{L^{m}(\mathbb{R}^{n})}\,\,\,\, a.e. \text{ in }\mathbb{R}^{n}.$$
\end{proof}

By \cite[Theorem 3.2.1]{AH}, we also may obtain the following exponential summability for $u$ and $\nabla u$ which account, for instance, to local integrability: 
\begin{itemize}
\item if $m=n/2s$ and supp $f \subset B_R$, there is a constant $A=A(n, m)$ such that
$$\int_{B_R}\exp\left(A_0 u^{m'}\right) dx \leq A R^{n}, \quad A_0=\frac{n}{ c(n, 2s)^{m'}\omega_{n-1}};$$
\item  in the case $m=n/(2s-1)$ and supp $f \subset B_R$, we have for the same $A$ as before that
$$\int_{B_R}\exp\left(A_0 |\nabla u|^{m'}\right) dx \leq A R^{n}, \quad A_0=\frac{n}{ c(n, 2s-1)^{m'}\omega_{n-1}}.$$
\end{itemize}

\begin{remark}The interested reader may compare  the above regularity results to the related findings for bounded domains and no first-order terms presented in \cite[Theorem 15-16]{LP} and \cite[Lemma 2.15]{AyP}.
\end{remark}

\section{Proof of Theorem \ref{Theo1}}\label{proof}

The structure of the proof  consists of the following steps
\begin{itemize}
\item[(I)] the starting point will be to consider $u_0=\mathcal{I}_{2s}(\omega)$ and prove that
$$(-\Delta)^{s}u_0=\omega$$in the sense of Definition \ref{Defsolomega}. This will be done in Lemma \ref{regI2sw} and Proposition \ref{SolRiesz}. 
\item[(II)] The next step is to consider first-order terms. Indeed, we show in  Lemmas \ref{ineqgrad}-\ref{regriesz}, and Propositions \ref{propp}-\ref{solu0} that $v= \mathcal{I}_{2s}(|\nabla u_0|^{q})$ solves
$$(-\Delta)^{s}v=|\nabla u_0|^{q}.$$
\item[(III)]  The final step, developed at the end of the section, is to define by recursion the sequence
$$u_{k+1}= \mathcal{I}_{2s}(\omega)+ \mathcal{I}_{2s}(|\nabla u_k|^{q}), \quad k\geq 0,$$and prove that $u_k$ converges in the right topology to a solution of  \eqref{eq}. 
\end{itemize}

\subsection{Step (I)}

\begin{lemma}\label{regI2sw}
	
	Let $\omega$ be a nonnegative Radon measure with compact support in $\mathbb{R}^n$. Then, for $n>2s$, $\mathcal{I}_{2s}(\omega)\in L_s(\mathbb{R}^n).$ 
	
\end{lemma}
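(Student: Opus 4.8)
The plan is to verify directly the two defining requirements of $L_s(\mathbb{R}^n)$: first that $\mathcal{I}_{2s}(\omega)\in L^1_{\textnormal{loc}}(\mathbb{R}^n)$, and then that $\int_{\mathbb{R}^n}(1+|x|^{n+2s})^{-1}\mathcal{I}_{2s}(\omega)(x)\,dx<\infty$. Throughout I would fix $R>0$ with $\textnormal{supp}(\omega)\subset B_R$ and freely use that $n-2s<n$ so that $|z|^{2s-n}$ is locally integrable.

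For local integrability, take an arbitrary ball $B_\rho$ and apply Tonelli's theorem (legitimate since the integrand is nonnegative) to exchange the order of integration:
$$\int_{B_\rho}\mathcal{I}_{2s}(\omega)(x)\,dx=c(n,2s)\int_{B_R}\left(\int_{B_\rho}\frac{dx}{|x-y|^{n-2s}}\right)d\omega(y).$$
For $y\in B_R$ one has $B_\rho\subset B_{\rho+R}(y)$, so the inner integral is bounded, uniformly in $y$, by $\int_{B_{\rho+R}(0)}|z|^{2s-n}\,dz<\infty$. Hence the right-hand side is at most a constant (depending on $n,s,\rho,R$) times $\omega(B_R)<\infty$, which gives $\mathcal{I}_{2s}(\omega)\in L^1_{\textnormal{loc}}(\mathbb{R}^n)$; in particular $\mathcal{I}_{2s}(\omega)$ is finite almost everywhere.

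For the weighted estimate, split $\mathbb{R}^n$ into $B_{2R}$ and its complement. On $B_{2R}$ the weight $(1+|x|^{n+2s})^{-1}$ is at most $1$, so that contribution is controlled by $\int_{B_{2R}}\mathcal{I}_{2s}(\omega)(x)\,dx<\infty$ from the previous step. On $\{|x|>2R\}$, for every $y\in B_R$ we have $|x-y|\geq |x|-R\geq |x|/2$, whence $\mathcal{I}_{2s}(\omega)(x)\leq C\,\omega(B_R)\,|x|^{2s-n}$ for a constant $C=C(n,s)$; therefore
$$\int_{|x|>2R}\frac{\mathcal{I}_{2s}(\omega)(x)}{1+|x|^{n+2s}}\,dx\leq C\,\omega(B_R)\int_{|x|>2R}\frac{dx}{|x|^{n-2s}\,|x|^{n+2s}}=C\,\omega(B_R)\int_{|x|>2R}\frac{dx}{|x|^{2n}}<\infty,$$
the last integral converging because $2n>n$. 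Adding the two contributions finishes the proof.

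The argument is entirely elementary and I do not anticipate a genuine obstacle; the only points that need a little care are the justification of Tonelli's theorem in the first step (which is automatic from nonnegativity) and the systematic use of the compact support of $\omega$, both to get the uniform bound on the inner integral and to obtain the lower bound $|x-y|\geq |x|/2$ that yields the decay of $\mathcal{I}_{2s}(\omega)$ at infinity.
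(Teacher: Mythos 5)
Your proof is correct and follows essentially the same route as the paper: split $\mathbb{R}^n$ (relative to $\textnormal{supp}(\omega)\subset B_R$) into the ball $B_{2R}$, where the weight is bounded and the contribution is controlled by local integrability of the Riesz kernel, and its complement, where the bound $|x-y|\geq|x|/2$ gives the decay $\mathcal{I}_{2s}(\omega)(x)\lesssim|x|^{2s-n}$ that makes the weighted integral converge. The only cosmetic differences are that you prove the $L^1_{\textnormal{loc}}$ claim explicitly (the paper asserts it) and apply Tonelli only in that step, whereas the paper swaps the order of integration first and then splits the inner $x$-integral; the substance is identical.
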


\begin{proof}
	
	First note that $\mathcal{I}_{2s}(\omega)\in L^1_{\textnormal{loc}}(\mathbb{R}^n)$. Let $R>0$ such that $\textnormal{supp}(\omega)\subset B_R$. For simplicity take $R=1$. Then, 
	
	\begin{equation*}
	\begin{split}
	\int_{\mathbb{R}^n}\frac{|\mathcal{I}_{2s}(\omega)(x)|}{1+|x|^{n+2s}}\; dx  & = \int_{B_1}\left( \int_{\mathbb{R}^n}\frac{c(n,2s)}{(1+|x|^{n+2s})|x-y|^{n-2s}}\;dx\right)d\omega(y). 
	\end{split}
	\end{equation*}Now we split the last integral as follows (we omit the constant for simplicity)
	
	\begin{equation*}
	\begin{split}
	\int_{B_1}\left( \int_{\mathbb{R}^n}\frac{1}{(1+|x|^{n+2s})|x-y|^{n-2s}}\;dx\right)d\omega(y) &=\int_{B_1}\left( \int_{B_2}\frac{1}{(1+|x|^{n+2s})|x-y|^{n-2s}}\;dx\right)d\omega(y) \\&+ \int_{B_1}\left( \int_{\mathbb{R}^n\setminus B_2}\frac{1}{(1+|x|^{n+2s})|x-y|^{n-2s}}\;dx\right)d\omega(y).
	\end{split}
	\end{equation*}Then, for the first integral we have
	
	\begin{equation*}
	\begin{split}
	\int_{B_1}\left(\int_{B_2}\frac{1}{(1+|x|^{n+2s})|x-y|^{n-2s}}\;dx\right)d\omega(y)&\leq \int_{B_1}\left( \int_{B_2}\frac{1}{|x-y|^{n-2s}}\;dx\right)d\omega(y)\\& \leq \int_{B_1}\left( \int_{B_{2+|y|}}|z|^{2s-n}\;dz\right)d\omega(y)\\ & = \omega_n \int_{B_1}\frac{(2+|y|)^{2s}}{2s}\;d\omega(y) \\ & <\infty. 
	\end{split}
	\end{equation*}For the second integral we have
	
	
	\begin{equation*}
	\begin{split}
	\int_{B_1}\left( \int_{\mathbb{R}^n\setminus B_2}\frac{1}{(1+|x|^{n+2s})|x-y|^{n-2s}}\,dx\right)d\omega(y)& \leq \int_{B_1}\left(\int_{\mathbb{R}^n\setminus B_2}(|x|-|y|)^{2s-n}|x|^{-n-2s}\,dx \right)d\omega(y)\\& <\infty
	\end{split}	
	\end{equation*}
	
\end{proof}

\begin{remark}\label{regI2s-1w}
	\rm{Reproducing the proof of Lemma \ref{regI2sw} with $2s-1$ instead of $2s$, one obtains $\mathcal{I}_{2s-1}(\omega) \in L_s(\mathbb{R}^{n})$ as well.}
\end{remark}



%
%
%
%
%
%

For the proof of Proposition \ref{SolRiesz} is necessary the following lemma (Proposition 2.4 of \cite{Buc}).
	
\begin{lemma}\label{LemmaBucur}
	
Let $n>2s$ and $f\in L^1(\mathbb{R}^n)\cap \mathcal{C}(\mathbb{R}^n)$ with $\mathcal{F}^{-1}(f)\in S_s(\mathbb{R}^n)$. Then
		\begin{equation}\label{eqprop2.4}
		\int_{\mathbb{R}^n}I_{2s}(x)\mathcal{F}^{-1}(f)(x)\;dx=\int_{\mathbb{R}^n}|x|^{-2s}f(x)\;dx.
		\end{equation}
		
\end{lemma}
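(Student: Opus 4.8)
The statement to prove is Lemma \ref{LemmaBucur}, which is the identity
\begin{equation*}
\int_{\mathbb{R}^n}I_{2s}(x)\mathcal{F}^{-1}(f)(x)\,dx=\int_{\mathbb{R}^n}|x|^{-2s}f(x)\,dx,
\end{equation*}
valid for $n>2s$ and $f\in L^1(\mathbb{R}^n)\cap\mathcal{C}(\mathbb{R}^n)$ with $\mathcal{F}^{-1}(f)\in\mathcal{S}_s(\mathbb{R}^n)$.

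\medskip

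\textbf{Approach.} The plan is to interpret this as a Parseval-type identity in which the Riesz kernel $I_{2s}$ is paired with $\mathcal{F}^{-1}(f)$ and transferred, via the known Fourier transform of the Riesz kernel, onto the pairing of $|x|^{-2s}$ with $f$. Recall that up to the normalizing constant $c(n,2s)$, the Riesz kernel $I_{2s}(x)=c(n,2s)|x|^{-(n-2s)}$ is, in the sense of tempered distributions, the Fourier transform of $|\xi|^{-2s}$ (this is precisely the content of the choice of $c(n,\alpha)$ in the definition of $\mathcal{I}_\alpha$). So formally
\begin{equation*}
\int_{\mathbb{R}^n}I_{2s}(x)\mathcal{F}^{-1}(f)(x)\,dx
= \langle \widehat{|\cdot|^{-2s}}, \mathcal{F}^{-1}(f)\rangle
= \langle |\cdot|^{-2s}, \mathcal{F}(\mathcal{F}^{-1}(f))\rangle
= \int_{\mathbb{R}^n}|\xi|^{-2s}f(\xi)\,d\xi,
\end{equation*}
using that $\mathcal{F}\mathcal{F}^{-1}=\mathrm{id}$. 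The whole task is to justify these manipulations rigorously given the stated integrability and regularity hypotheses, i.e. to show all integrals converge absolutely and the distributional pairing reduces to honest Lebesgue integrals.

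\medskip

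\textbf{Key steps.} First I would check absolute convergence of both sides: the left side is finite because $I_{2s}(x)\sim|x|^{2s-n}$ is locally integrable (as $n>2s$) and $\mathcal{F}^{-1}(f)\in\mathcal{S}_s(\mathbb{R}^n)$ decays like $(1+|x|^{n+2s})^{-1}$, so the product is integrable at infinity; the right side is finite because $|\xi|^{-2s}$ is locally integrable (again $n>2s$) and $f\in L^1(\mathbb{R}^n)$ controls the behavior at infinity. Second, I would realize the identity by a regularization/approximation: introduce a Gaussian regularizer $e^{-\varepsilon|x|^2}$ (or a cutoff) so that one works with genuine Schwartz functions, apply the classical Parseval relation $\int \hat g\,h = \int g\,\hat h$ where it holds unconditionally, and then pass to the limit $\varepsilon\to 0$ using dominated convergence with the dominating functions supplied by the integrability bounds from the first step. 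The input that $I_{2s}=\mathcal{F}(|\cdot|^{-2s})$ as tempered distributions should be quoted from the standard references (e.g. \cite{St} or \cite{AH}), and one checks it is compatible with the normalization $c(n,2s)$ used here. Since this is cited as Proposition 2.4 of \cite{Buc}, I would also simply invoke that reference, but the self-contained argument above is what I would write out.

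\medskip

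\textbf{Main obstacle.} The delicate point is the exchange of the Fourier transform with the integral against the singular, slowly-decaying kernels $|x|^{-2s}$ and $|x|^{2s-n}$: neither lies in $L^1$ nor $L^2$, so Plancherel and the $L^1$ Fourier inversion theorem do not apply directly, and one must genuinely argue at the level of tempered distributions or through a carefully chosen approximation. Ensuring that the regularization converges on \emph{both} sides simultaneously — that the Gaussian cutoff in physical space corresponds, after Fourier transform, to an operation whose limit is controlled — is where the hypotheses $f\in L^1\cap\mathcal{C}$ and $\mathcal{F}^{-1}(f)\in\mathcal{S}_s$ are used in an essential way (continuity of $f$ pins down the pointwise value $f(\xi)$ against the measure-like behavior of $|\xi|^{-2s}$ near the origin). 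Everything else is routine.
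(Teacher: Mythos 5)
The paper does not reprove this lemma; it is quoted verbatim as Proposition 2.4 of \cite{Buc}, and the text immediately preceding the statement says as much. So there is no internal proof to compare your argument against, and your remark that you ``would also simply invoke that reference'' is exactly what the authors do.

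That said, your sketch of a proof is the standard and correct route: interpret the identity as the multiplication formula $\int \hat{g}\,h=\int g\,\hat{h}$ with $g$ a constant multiple of $|\cdot|^{-2s}$ and $h=\mathcal{F}^{-1}(f)$, and handle the fact that neither $I_{2s}$ nor $|\cdot|^{-2s}$ is in $L^1$ or $L^2$ by a Gaussian regularization and dominated convergence, using the hypotheses to supply the dominating functions. Your integrability checks on both sides are correct. Two points you gloss over and should state explicitly if you were to write this out: (i) on the Fourier side you need a uniform bound on $\hat{g}_\varepsilon = G_\varepsilon * f$; this is available because $\mathcal{F}^{-1}(f)\in\mathcal{S}_s\subset L^1$ forces $f=\mathcal{F}(\mathcal{F}^{-1}(f))$ to be bounded (Riemann--Lebesgue), not merely $L^1\cap C$ as the hypothesis literally reads; (ii) the normalization must be checked against the Fourier convention actually in force. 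With the $e^{2\pi i x\cdot\xi}$ convention the paper later uses in the proof of Proposition \ref{SolRiesz}, the constant $c(n,2s)$ from Stein gives $\mathcal{F}(I_{2s})=(2\pi|\xi|)^{-2s}$ rather than $|\xi|^{-2s}$, so the identity as stated is convention-sensitive and one should confirm it matches Bucur's normalization before using it. These are gaps in the write-up rather than in the idea, and they do not affect the role the lemma plays downstream.
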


\begin{remark}\label{rm Buc} A closer look at the proof of \cite[Proposition 2.4]{Buc} reveals that the assumption $\mathcal{F}^{-1}(f)\in S_s(\mathbb{R}^n)$ may be replaced by \begin{enumerate}[i.]
		\item $\int_{\mathbb{R}^n}I_{2s}(x)\mathcal{F}^{-1}(f)(x)\,dx<\infty$ \rm{and}
		\item $\mathcal{F}^{-1}(f)\in L^2(\mathbb{R}^n)$.
	\end{enumerate}We will use this observation in Proposition \ref{solu0}.
\end{remark}

%



\bigskip 

\begin{proposition}\label{SolRiesz}
	
	For $n>2s$, $\mathcal{I}_{2s}(\omega)$ is a weak solution of
		
		\begin{equation}\label{eq2}
		(-\Delta)^s u = \omega 	\quad \text{in }\mathbb{R}^n.
		\end{equation}

\end{proposition}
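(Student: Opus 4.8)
The plan is to unwind the definition of weak solution from Definition \ref{Defsolomega}: I must show that for every $\varphi\in\mathcal{S}(\mathbb{R}^n)$,
$$\int_{\mathbb{R}^n}\mathcal{I}_{2s}(\omega)(x)\,(-\Delta)^s\varphi(x)\,dx=\int_{\mathbb{R}^n}\varphi(y)\,d\omega(y).$$
The left-hand side is well defined because $\mathcal{I}_{2s}(\omega)\in L_s(\mathbb{R}^n)$ by Lemma \ref{regI2sw} and $(-\Delta)^s\varphi\in\mathcal{S}_s(\mathbb{R}^n)$. Writing $\mathcal{I}_{2s}(\omega)(x)=\int_{\mathbb{R}^n}I_{2s}(x-y)\,d\omega(y)$ and interchanging the order of integration by Fubini's theorem, the left-hand side equals $\int_{\mathbb{R}^n}\big(\int_{\mathbb{R}^n}I_{2s}(x-y)\,(-\Delta)^s\varphi(x)\,dx\big)\,d\omega(y)$, so the statement reduces to the pointwise identity
$$\int_{\mathbb{R}^n}I_{2s}(x-y)\,(-\Delta)^s\varphi(x)\,dx=\varphi(y),\qquad y\in\mathbb{R}^n,$$
that is, to the fact that $I_{2s}$ is a fundamental solution of $(-\Delta)^s$ when tested against Schwartz functions.

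To prove this identity I would first reduce to $y=0$ by translation: since $\varphi(\cdot+y)\in\mathcal{S}(\mathbb{R}^n)$ and $(-\Delta)^s\big(\varphi(\cdot+y)\big)(x)=\big((-\Delta)^s\varphi\big)(x+y)$, the change of variables $z=x-y$ shows it suffices to check $\int_{\mathbb{R}^n}I_{2s}(z)\,(-\Delta)^s\psi(z)\,dz=\psi(0)$ for every $\psi\in\mathcal{S}(\mathbb{R}^n)$. For this I apply Lemma \ref{LemmaBucur} with $f(\xi):=\mathcal{F}\big((-\Delta)^s\psi\big)(\xi)=|\xi|^{2s}\mathcal{F}(\psi)(\xi)$: because $\psi\in\mathcal{S}(\mathbb{R}^n)$ and $2s>0$, this $f$ is continuous and rapidly decreasing, hence $f\in L^1(\mathbb{R}^n)\cap\mathcal{C}(\mathbb{R}^n)$, while $\mathcal{F}^{-1}(f)=(-\Delta)^s\psi\in\mathcal{S}_s(\mathbb{R}^n)$ by \eqref{fracLaplacianFou}. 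The conclusion of the lemma then gives
$$\int_{\mathbb{R}^n}I_{2s}(z)\,(-\Delta)^s\psi(z)\,dz=\int_{\mathbb{R}^n}|\xi|^{-2s}f(\xi)\,d\xi=\int_{\mathbb{R}^n}\mathcal{F}(\psi)(\xi)\,d\xi=\psi(0),$$
the last step being Fourier inversion at the origin; the normalization constants $a(n,s)$ and $c(n,2s)$ are precisely those for which these identities hold without any extraneous factor.

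The remaining point, and the only one that needs genuine estimates, is the justification of Fubini. Since $(-\Delta)^s\varphi\in\mathcal{S}_s(\mathbb{R}^n)$ we have $|(-\Delta)^s\varphi(x)|\le C(1+|x|^{n+2s})^{-1}$, and $\omega$ is a finite measure supported in some ball $B_R$. Then
$$\int_{B_R}\!\left(\int_{\mathbb{R}^n}\frac{c(n,2s)\,|(-\Delta)^s\varphi(x)|}{|x-y|^{n-2s}}\,dx\right)d\omega(y)\le C\int_{B_R}\!\left(\int_{\mathbb{R}^n}\frac{dx}{(1+|x|^{n+2s})|x-y|^{n-2s}}\right)d\omega(y),$$
and splitting the inner integral over $B_{2R}$ and $\mathbb{R}^n\setminus B_{2R}$ exactly as in the proof of Lemma \ref{regI2sw} bounds it by a constant independent of $y\in B_R$; since $\omega(B_R)<\infty$ the double integral is finite, so Fubini applies. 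Combining the three steps yields the weak formulation, so $\mathcal{I}_{2s}(\omega)$ solves \eqref{eq2}. I expect the main obstacle to be purely technical: checking the hypotheses of Lemma \ref{LemmaBucur} and the absolute integrability for Fubini while keeping the Fourier-transform normalization consistent so that no spurious constants survive.
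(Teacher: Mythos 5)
Your proof is correct, and it takes a genuinely more direct route than the paper's. The paper applies Lemma~\ref{LemmaBucur} to the function $f=\mathcal{F}(\psi*\omega)$, where $\psi$ is (after unwinding their definitions) essentially the reflection of $(-\Delta)^s\varphi_0$. That choice forces them to establish the bound $|\psi*\omega(x)|\le C(1+|x|^{n+2s})^{-1}$, to verify $\mathcal F(\psi*\omega)\in L^1\cap\mathcal C$, and then to push through a chain of manipulations involving $\mathcal F(\omega)$ and several changes of variable. You instead apply Fubini first to reduce the weak formulation to the pointwise fundamental-solution identity $\int_{\mathbb R^n}I_{2s}(x-y)(-\Delta)^s\varphi(x)\,dx=\varphi(y)$, translate to $y=0$, and apply Lemma~\ref{LemmaBucur} directly to $f=\mathcal F((-\Delta)^s\psi)=|\xi|^{2s}\mathcal F(\psi)(\xi)$, whose inverse Fourier transform $(-\Delta)^s\psi$ the paper already observes lies in $\mathcal S_s(\mathbb R^n)$. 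The factor $|\xi|^{2s}$ then cancels $|\xi|^{-2s}$ cleanly and Fourier inversion at the origin closes the argument. The only remaining work is the Fubini justification, which you correctly reduce to the estimate already carried out in the proof of Lemma~\ref{regI2sw}. Both proofs rest on the same key lemma, but yours makes the fundamental-solution structure of $I_{2s}$ explicit, avoids the estimates on $\psi*\omega$ and the Fourier transform of the measure, and is noticeably shorter.
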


\begin{proof}

We will prove that Definition \ref{Defsolomega} is satisfied. We already know that $\mathcal{I}_{2s}(\omega)\in L_s(\mathbb{R}^n)$ by Lemma \ref{regI2sw}. Let $\varphi_0\in\mathcal{S}(\mathbb{R}^n)$. Put $$\varphi(x)=|x|^{2s}\mathcal{F}(\varphi_0)(x)$$ and take $\psi$ such that $$\varphi(x)=\mathcal{F}(\psi)(-x).$$Then, $\psi(x)=\mathcal{F}^{-1}(\varphi)(-x)$. Thus, since $\varphi_0\in \mathcal{S}(\mathbb{R}^n)$, $\mathcal{F}^{-1}(\varphi)\in \mathcal{S}_s(\mathbb{R}^n)$ and therefore $\psi\in \mathcal{S}_s(\mathbb{R}^n)$.

In what follows, we shall employ Lemma \ref{LemmaBucur}.  We first prove that $\psi*\omega\in\mathcal{S}_s(\mathbb{R}^n)$. Since $D^\alpha \psi\in\mathcal{S}_s(\mathbb{R}^n)$ and $D^\alpha(\psi*\omega)=(D^\alpha \psi)*\omega$ we just need to verify that 

\begin{equation}\label{cotapsiconvomega}
|\psi*\omega(x)|\leq C(1+|x|^{n+2s})^{-1}\qquad\qquad x\in\mathbb{R}^n.
\end{equation}

Take $R>0$ such that $\textnormal{supp}(\omega)\subset B_R$. 

\begin{equation*}
\begin{split}
|\psi*\omega(x)|& \leq \int_{B_R}|\psi(x-y)|\,d\omega(y)\leq C\int_{B_R}(1+|x-y|^{n+2s})^{-1}\,d\omega(y) \\& = C(1+|x|^{n+2s})^{-1}\int_{B_R}\frac{1+|x|^{n+2s}}{1+|x-y|^{n+2s}}\,d\omega(y).
\end{split}
\end{equation*}Now, observe that $$\frac{|x|^{n+2s}}{1+(|x|-R)^{n+2s}}\to 1\qquad\qquad \textnormal{for}\;|x|\to\infty.$$Then, there is $r>0$ such that 
\begin{equation}\label{lim int}
\left|\frac{|x|^{n+2s}}{1+(|x|-R)^{n+2s}}-1 \right|<1
\end{equation}for $|x|>r$. Let $r_0>\max\{2R,r\}$. Then, for $x\in B_{r_0}$

$$\int_{B_R}\frac{1+|x|^{n+2s}}{1+|x-y|^{n+2s}}\,d\omega(y)\leq (1+r_0^{n+2s})\omega(B_R)<\infty$$and for $x\in\mathbb{R}^n\setminus B_{r_0}$,

\begin{equation*}
\begin{split}
\int_{B_R}\frac{1+|x|^{n+2s}}{1+|x-y|^{n+2s}}\,d\omega(y)& = \int_{B_R}\frac{1}{1+|x-y|^{n+2s}}\,d\omega(y) + \int_{B_R}\frac{|x|^{n+2s}}{1+|x-y|^{n+2s}}\,d\omega(y) \\& \leq \omega(B_R) + \int_{B_R}\frac{|x|^{n+2s}}{1+|x-y|^{n+2s}}\,d\omega(y)\\&<\infty	
\end{split}
\end{equation*}by \eqref{lim int} and the choice of $r_0$. Thus, we have proved \eqref{cotapsiconvomega} and we  conclude that $\psi*\omega\in \mathcal{S}_s(\mathbb{R}^n)$. 

Moreover, observe that the integrability of $\psi*\omega$ implies $\mathcal{F}(\psi*\omega)\in\mathcal{C}^{\infty}(\mathbb{R}^n).$

%

To verify the integrability of $\mathcal{F}(\psi*\omega)$ note that it easily follows
$$\mathcal{F}(\psi*\omega)(x) = \mathcal{F}(\psi)(x)\mathcal{F}(\omega)(x) \qquad  \text{ for all }x.$$

%
Recalling $\mathcal{F}(\psi)(x)=\varphi(-x)=|x|^{2s}\mathcal{F}(\varphi_0)(-x)$, we have

\begin{equation*}
\begin{split}
\int_{\mathbb{R}^n}|\mathcal{F}(\psi*\omega)(x)|\,dx & = \int_{\mathbb{R}^n}|\mathcal{F}(\psi)(x)||\mathcal{F}(\omega)(x)|\,dx \\& \leq\omega(B_R)\int_{\mathbb{R}^n}|\mathcal{F}(\psi)(x)|\,dx \\& = \omega(B_R)\int_{\mathbb{R}^n}|x|^{2s}|\mathcal{F}(\varphi_0)(-x)|\,dx \\& = \omega(B_R)\left[\int_{B_1}|x|^{2s}|\mathcal{F}(\varphi_0)(-x)|\,dx + \int_{\mathbb{R}^n\setminus B_1}|x|^{2s}|\mathcal{F}(\varphi_0)(-x)|\,dx\right] \\ & \leq C+  \int_{\mathbb{R}^n\setminus B_1}|x|^{2s}(1+|x|)^{-n-2}\,dx < \infty.
\end{split}
\end{equation*}


%

We can apply now Lemma \ref{LemmaBucur} to $f=\mathcal{F}(\psi*\omega)$ to obtain

\begin{equation}\label{1 eqq}
\begin{split}
 \int_{\mathbb{R}^n}I_{2s}(x)(\psi*\omega)(x)\;dx  &=\int_{\mathbb{R}^n}I_{2s}(x)\mathcal{F}^{-1}(\mathcal{F}(\psi*\omega))(x)\;dx\\& = \int_{\mathbb{R}^n}|x|^{-2s}\mathcal{F}(\omega)(x)\mathcal{F}(\psi)(x)\;dx.
\end{split} 
\end{equation}Now,

\begin{equation}\label{eq 1q}
\begin{split}
\int_{\mathbb{R}^n}I_{2s}(x)(\psi*\omega)(x)\;dx &= \int_{\mathbb{R}^n}\int_{\mathbb{R}^n}I_{2s}(x)\psi(x-y)\,d\omega(y)\,dx \\& = \int_{\mathbb{R}^n}\left[\int_{\mathbb{R}^n} I_{2s}(x)\mathcal{F}^{-1}(\varphi)(y-x)\,dx\right]\,d\omega(y). 
\end{split}
\end{equation}We make the change of variable $z=x-y$ in the last integral and obtain

\begin{equation}\label{eq 1}
\begin{split}
\int_{\mathbb{R}^n}\left[ \int_{\mathbb{R}^n}I_{2s}(y+z)\mathcal{F}^{-1}(\varphi)(-z)\,dz\right] \,d\omega(y) & = \int_{\mathbb{R}^n}\mathcal{F}^{-1}(\varphi)(-z)\left[ \int_{\mathbb{R}^n}I_{2s}(y+z)\,d\omega(y)\right]\,dz \\& = \int_{\mathbb{R}^n}\mathcal{I}_{2s}(\omega)(-z)\mathcal{F}^{-1}(\varphi)(-z)\,dz \\ & = -\int_{\mathbb{R}^n}\mathcal{I}_{2s}(\omega)(x)\mathcal{F}^{-1}(\varphi)(x)\,dx. 
\end{split}
\end{equation}Thus from  \eqref{1 eqq}, \eqref{eq 1q} and \eqref{eq 1} we get
\begin{equation}\label{eqq12}
\int_{\mathbb{R}^n}\mathcal{I}_{2s}(\omega)(x)\mathcal{F}^{-1}(\varphi)(x)\,dx =- \int_{\mathbb{R}^n}|x|^{-2s}\mathcal{F}(\omega)(x)\mathcal{F}(\psi)(x)\;dx =- \int_{\mathbb{R}^n}|x|^{-2s}\mathcal{F}(\omega)(x)\varphi(-x)\;dx.
\end{equation}

Then

\begin{equation*}
\begin{split}
\int_{\mathbb{R}^n}\mathcal{I}_{2s}(\omega)(x)(-\Delta)^s(\varphi_0)(x)\,dx & = -\int_{\mathbb{R}^n}|x|^{-2s}\mathcal{F}(\omega)(x)\varphi(-x)\,dx  \qquad \text{by }\eqref{eqq12}\\& = \int_{\mathbb{R}^n}|x|^{-2s}\mathcal{F}(\omega)(-x)\varphi(x)\,dx \\& = \int_{\mathbb{R}^n}|x|^{-2s}\mathcal{F}^{-1}(\omega)(x)|x|^{2s}\mathcal{F}(\varphi_0)(x)\,dx \\& = \int_{\mathbb{R}^n}\left[ \int_{\mathbb{R}^n}e^{2\pi ix\cdot y}\,d\omega(y)\right] \mathcal{F}(\varphi_0)(x)\,dx \\& = \int_{\mathbb{R}^n}\left[ \int_{\mathbb{R}^n}e^{2\pi ix\cdot y}\mathcal{F}(\varphi_0)(x)\,dx\right] d\omega(y)\\& = \int_{\mathbb{R}^n}\varphi_0(y)\,d\omega(y).
\end{split}
\end{equation*}

\end{proof}

\subsection{Step (II)}

\begin{lemma}\label{ineqgrad}
	
	Let $u_0=\mathcal{I}_{2s}(\omega)$. There exists $C_0=C_0(n, s)$  such that
	
	\begin{equation}\label{eqgrad}
	|\nabla u_0|\leq C_0\mathcal{I}_{2s-1}(\omega).
	\end{equation}
	
\end{lemma}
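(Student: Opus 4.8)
The plan is to differentiate the Riesz potential representation of $u_0$ directly and estimate the resulting kernel. Since $u_0 = \mathcal{I}_{2s}(\omega) = I_{2s}*\omega$ with $I_{2s}(x) = c(n,2s)|x|^{-(n-2s)}$, and since $\omega$ is a compactly supported nonnegative Radon measure, I would first justify that one may pass the gradient inside the convolution, so that
$$
\nabla u_0(x) = \int_{\mathbb{R}^n} \nabla_x I_{2s}(x-y)\, d\omega(y).
$$
A direct computation gives $\nabla I_{2s}(x) = -c(n,2s)(n-2s)\, x\, |x|^{-(n-2s+2)}$, hence the pointwise bound
$$
|\nabla_x I_{2s}(x-y)| = c(n,2s)(n-2s)\,|x-y|^{-(n-2s+1)} = (n-2s)\,\frac{c(n,2s)}{c(n,2s-1)}\, I_{2s-1}(x-y).
$$
Integrating this inequality against $d\omega(y)$ immediately yields $|\nabla u_0(x)| \le C_0\, \mathcal{I}_{2s-1}(\omega)(x)$ with $C_0 = (n-2s)\,c(n,2s)/c(n,2s-1)$, which is a constant depending only on $n$ and $s$. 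Note that here $n-2s+1 < n$ because $s > 1/2$, so $\mathcal{I}_{2s-1}$ is a genuine Riesz potential of admissible order and the kernel $I_{2s-1}$ is locally integrable; this is exactly where the hypothesis $s \in (1/2,1)$ enters.

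The only real point requiring care — and the step I expect to be the main obstacle — is the differentiation under the integral sign: one must show that $\nabla u_0$ exists (a.e., and in fact everywhere off $\mathrm{supp}\,\omega$, and in the weak sense globally) and equals $(\nabla I_{2s})*\omega$. The standard way to handle this is a dominated-convergence / difference-quotient argument: fix a ball $B_\rho$ and split $\omega = \omega|_{B_{2\rho}} + \omega|_{B_{2\rho}^c}$. For the far part $\omega|_{B_{2\rho}^c}$ the kernel $x \mapsto I_{2s}(x-y)$ is smooth and uniformly bounded with bounded derivatives for $x \in B_\rho$, $y \notin B_{2\rho}$, so differentiation under the integral is immediate. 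For the near part one uses that $|\nabla_x I_{2s}(x-y)| \le C|x-y|^{-(n-2s+1)}$ is locally integrable in $y$ (again using $n-2s+1<n$) together with a uniform-integrability estimate on the difference quotients $h^{-1}(I_{2s}(x+he_j-y) - I_{2s}(x-y))$, which are dominated by $C\sup_{|t|\le |h|}|x+te_j-y|^{-(n-2s+1)}$; a rearrangement bound of the form $\int_{B_r}|z|^{-(n-2s+1)}\,dz \le C r^{2s-1}$ controls the tails uniformly in $h$, giving convergence of the difference quotients in $L^1_{\mathrm{loc}}(d\omega)$. Alternatively, since the desired identity is already known for $\varphi\in\mathcal{S}$ in place of $\omega$, one can obtain it for general compactly supported $\omega$ by approximation ($\omega_\varepsilon = \omega * \eta_\varepsilon$ mollified), establishing the gradient bound for each $\omega_\varepsilon$ and passing to the limit using that $I_{2s-1}*\omega_\varepsilon \to I_{2s-1}*\omega$ in $L^1_{\mathrm{loc}}$. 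Either route is routine once the locally integrable kernel bound above is in hand; I would present the mollification version as it is shortest.
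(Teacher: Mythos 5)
Your proposal is correct, and it reaches the same identity $\nabla u_0 = (\nabla I_{2s})*\omega$ and the same constant $C_0 = (n-2s)\,c(n,2s)/c(n,2s-1)$, but the justification you give for the differentiation is organized differently from the paper's. The paper establishes the weak derivative directly by a classical potential-theoretic computation: it applies Fubini, integrates by parts on $B_R\setminus B_r(y)$ for each fixed $y\in\textnormal{supp}(\omega)$, and shows the boundary term on $\partial B_r(y)$ is $O(r^{2s-1})=o(1)$ as $r\to 0$ uniformly in $y$ (here $s>1/2$ is used), then passes to the limit in the $\omega$-integral by dominated convergence using the local integrability of $|x-y|^{-(n-2s+1)}$. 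You instead propose either a difference-quotient argument or mollification $\omega_\varepsilon = \omega * \eta_\varepsilon$ to legitimize passing the gradient under the convolution. All three are standard and valid, and each hinges on the same two facts you identify — $n-2s+1 < n$ so $|\nabla I_{2s}|$ is locally integrable, and $\omega$ is compactly supported so the far part is smooth. The paper's integration-by-parts route has the virtue of producing the weak derivative identity in exactly the form needed later without any extra approximation layer; your mollification route is arguably shorter to write down once one accepts that $I_{2s-1}*\omega_\varepsilon\to I_{2s-1}*\omega$ in $L^1_{\textnormal{loc}}$, which is itself essentially Fubini plus local integrability. No gap; just a different but equivalent technical scaffolding.
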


\begin{proof}
	
We will show that 
\begin{equation}\label{deriv}
\frac{\partial u_0}{\partial x_i}(x)=c(n,2s)\int_{\mathbb{R}^n}\frac{(x_i-y_i)}{|x-y|^{n-2s+2}}\;d\omega(y)\qquad i=1,...,n.
\end{equation}in the weak sense. Let $\varphi\in \mathcal{C}_0^\infty(\mathbb{R}^n)$ and let $R>0$ such that $\textnormal{supp}(\varphi)\subset B_R$. We want to show

\begin{equation}\label{weakderiv}
\int_{\mathbb{R}^n}u_0(x)\frac{\partial\varphi}{\partial x_i}(x)\;dx = -\int_{\mathbb{R}^n}\left[c(n,2s)\int_{\mathbb{R}^n}\frac{(x_i-y_i)}{|x-y|^{n-2s+2}}\;d\omega(y) \right]\varphi(x)\;dx. 
\end{equation}Now, by Fubini's Theorem  we have that

\begin{equation*}
\begin{split}
& \int_{\mathbb{R}^n}\left[ \int_{\textnormal{supp}(\omega)}\frac{c(n,2s)}{|x-y|^{n-2s}}\frac{\partial\varphi}{\partial x_i}(x)\;d\omega(y)\right]dx  \\ & \qquad \qquad =  \int_{\textnormal{supp}(\omega)}\left[ \int_{B_R}\frac{c(n,2s)}{|x-y|^{n-2s}}\frac{\partial\varphi}{\partial x_i}(x)\;dx\right]d\omega(y) 
\end{split}
\end{equation*}and

\begin{equation*}
\begin{split}
&-\int_{\mathbb{R}^n}\left[ c(n,2s)\int_{\textnormal{supp}(\omega)}\frac{(x_i-y_i)}{|x-y|^{n-2s+2}}\varphi(x)d\omega(y)\right]dx \\ & \qquad \qquad  = -c(n,2s)\int_{\textnormal{supp}(\omega)}\left[ \int_{B_R}\frac{(x_i-y_i)}{|x-y|^{n-2s+2}}\varphi(x)dx\right]d\omega(y).  
\end{split}
\end{equation*}For $r>0$ small and $y\in\textnormal{supp}(\omega)$ fixed, integration by parts gives 

\begin{equation}\label{int by parts}
\int_{B_R\setminus B_r(y)}\frac{1}{|x-y|^{n-2s}}\frac{\partial \varphi}{\partial x_i}(x)\;dx  = \int_{\partial B_r(y)}\frac{\varphi(x)\eta_i}{|x-y|^{n-2s}}\;dS_x - \int_{B_R\setminus B_r(y)}\frac{(x_i-y_i)}{|x-y|^{n-2s+2}}\varphi(x)\;dx. 
\end{equation}Here we have used the fact that $\varphi=0$ in $\partial B_R$. The vector $\eta=(\eta_1,...,\eta_n)$ is the exterior normal unit vector to $\partial B_r(y)$, so $$\eta_i=\frac{(x_i-y_i)}{|x-y|}.$$

Now, for all $y\in\textnormal{supp}(\omega)$

\begin{equation*}
\begin{split}
\left|\int_{\partial B_r(y)}\frac{1}{|x-y|^{n-2s}}\varphi(x)\eta_i\;dS_x \right|=&\left| \int_{\partial B_r(y)}\frac{(x_i-y_i)}{|x-y|^{n-2s+1}}\varphi(x)\;dS_x\right| \\&\leq  \int_{\partial B_r(y)}\frac{1}{|x-y|^{n-2s}}|\varphi(x)|\;dS_x \\ & = r^{2s-n}\int_{\partial B_r(y)}|\varphi(x)|\;dS_x \\& = r^{2s-1}r^{1-n}\int_{\partial B_r(y)}|\varphi(x)|\;dS_x.
\end{split}
\end{equation*}Observe

$$\lim_{r\to 0}r^{1-n}\int_{\partial B_r(y)}|\varphi(x)|\;dS_x = \omega_n|\varphi(y)|\leq C$$for all $y\in\textnormal{supp}(\omega).$ So

$$\int_{\partial B_r(y)}\frac{1}{|x-y|^{n-2s}}\varphi(x)\eta_i \;dS_x= o(1)$$for $r\to 0$ uniformly on $y\in\textnormal{supp}(\omega).$ Regarding the last term in \eqref{int by parts} we have

$$\int_{B_R\setminus B_r(y)}\frac{(x_i-y_i)}{|x-y|^{n-2s+2}}\varphi(x)\;dx \leq  C \int_{B_R}\frac{1}{|x-y|^{n-2s+1}}dx$$and taking $R_0>0$ big enough we obtain that

\begin{equation*}
\begin{split}
\int_{\textnormal{supp}(\omega)}\int_{B_{R_0}}\frac{1}{|x-y|^{n-2s+1}}\;dx\;d\omega(y)& = \int_{\textnormal{supp}(\omega)}\int_{0}^{R_0}\omega_n r^{2s-n-1}r^{n-1}\;dr\;d\omega(y)\\& \leq C R_0^{2s-1}<\infty.
\end{split}
\end{equation*}
%

\noindent Hence, by Lebesgue dominated convergence theorem

$$\lim_{r\to 0}\int_{\textnormal{supp}(\omega)}\int_{B_R}\frac{(x_i-y_i)}{|x-y|^{n-2s+2}}\varphi(x)\mathcal{X}_{B_R\setminus B_r(y)}\;dx\;d\omega(y)= \int_{\textnormal{supp}(\omega)}\int_{B_R}\frac{(x_i-y_i)}{|x-y|^{n-2s+2}}\varphi(x)\;dx\;d\omega(y).$$Therefore

$$\lim_{r\to 0}\left[ \int_{\textnormal{supp}(\omega)}\int_{B_R\setminus B_r(y)}\frac{1}{|x-y|^{n-2s}}\frac{\partial\varphi}{\partial x_i}(x)\;dx\;d\omega(y)\right]= -\int_{\textnormal{supp}(\omega)}\int_{B_R}\frac{(x_i-y_i)}{|x-y|^{n-2s+2}}\varphi(x)\;dx\;d\omega(y).$$

Similarly

%

$$\lim_{r\to 0}\int_{\textnormal{supp}(\omega)}\int_{B_R\setminus B_r(y)}\frac{1}{|x-y|^{n-2s}}\frac{\partial \varphi}{\partial x_i}\;dx\;d\omega(y) = \int_{\textnormal{supp}(\omega)}\int_{B_R}\frac{1}{|x-y|^{n-2s}}\frac{\partial\varphi}{\partial x_i}\;dx\;d\omega(y).$$Then, we have proved \eqref{weakderiv} and therefore we have $$\nabla u_0(x) = c(n,2s)\int_{\mathbb{R}^n}\frac{(x-y)}{|x-y|^{n-2s+2}}\;d\omega(y)$$in the weak sense.    	
	
\end{proof}

\begin{lemma}\label{regriesz}
For $n>2s$ and $q>p^*$, if hypothesis \eqref{ineqpot} is satisfied with a constant $C_1=C_1(n, q, s)$, then $\mathcal{I}_{2s-1}(\omega)\in L^q(\mathbb{R}^n)$.
\end{lemma}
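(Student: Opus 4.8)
The plan is to set $f:=\mathcal{I}_{2s-1}(\omega)$ and to estimate $\int_{\mathbb{R}^n}f^q$ separately far from and near $\textnormal{supp}(\omega)$: the decay of $f$ at infinity together with the super-criticality $q>p^*$ handles the first region, while the pointwise inequality \eqref{ineqpot} handles the second. Throughout I would use that $f\ge 0$ and that, by Remark \ref{regI2s-1w}, $f\in L_s(\mathbb{R}^n)\subset L^1_{\textnormal{loc}}(\mathbb{R}^n)$, so that $f<\infty$ a.e.\ and $f\in L^1(B_R)$ for every ball $B_R$. Fix $R>0$ with $\textnormal{supp}(\omega)\subset B_R$.

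For the tail: if $|x|\ge 2R$ and $y\in\textnormal{supp}(\omega)$ then $|x-y|\ge |x|-R\ge |x|/2$, hence $f(x)\le 2^{\,n-2s+1}c(n,2s-1)\,\omega(\mathbb{R}^n)\,|x|^{-(n-2s+1)}$. Since $q>p^*=n/(n-2s+1)$ is equivalent to $q(n-2s+1)>n$, the power $|x|^{-q(n-2s+1)}$ is integrable near infinity, and therefore $\int_{\mathbb{R}^n\setminus B_{2R}}f^q<\infty$. This is the only place the hypothesis $q>p^*$ enters in this lemma.

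For the local part I would integrate \eqref{ineqpot} over $B_{2R}$ and apply Tonelli's theorem:
$$c(n,2s-1)\int_{\mathbb{R}^n}f(y)^q\Big(\int_{B_{2R}}\frac{dx}{|x-y|^{n-2s+1}}\Big)\,dy=\int_{B_{2R}}\mathcal{I}_{2s-1}(f^q)(x)\,dx\le C_1\int_{B_{2R}}f(x)\,dx<\infty,$$
the last bound being the local integrability of $f$. For $y\in B_{2R}$ we have $|x-y|\le 4R$ for all $x\in B_{2R}$, so the inner integral is at least $|B_{2R}|\,(4R)^{-(n-2s+1)}$, a positive constant independent of $y$. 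Restricting the outer integral to $B_{2R}$ therefore forces $\int_{B_{2R}}f^q<\infty$, and adding this to the tail estimate gives $f=\mathcal{I}_{2s-1}(\omega)\in L^q(\mathbb{R}^n)$.

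The hard part is the local estimate, and the point is that one must genuinely use \eqref{ineqpot}: a direct computation of $\int_{B_{2R}}f^q$ is not available, since by $\mathcal{I}_1\mathcal{I}_{2s-1}=\mathcal{I}_{2s}$ inequality \eqref{ineqpot} is equivalent to $\mathcal{I}_{2s}\big([\mathcal{I}_{2s-1}\omega]^q\big)\le C\,\mathcal{I}_{2s}\omega$, whose finiteness already presupposes $\mathcal{I}_{2s-1}\omega\in L^q_{\textnormal{loc}}$. The mechanism exploited above is simply that integrating the Riesz potential $\mathcal{I}_{2s-1}(g)$ over a fixed ball $B$ controls, up to a constant depending on $B$, the integral of $g$ over $B$. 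The only routine technical point is the well-definedness of $\mathcal{I}_{2s-1}(f^q)$, which is implicit in the statement of \eqref{ineqpot}.
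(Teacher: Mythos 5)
Your proof is correct, and for the local part it is genuinely different from, and arguably more self-contained than, the paper's. The paper establishes $\mathcal{I}_{2s-1}(\omega)\in L^q_{\textnormal{loc}}(\mathbb{R}^n)$ by invoking Theorem~2.1 of \cite{MV} as a black box (this is where the consequences of \eqref{cap1}, respectively \eqref{ineqpot}, on local $L^q$-integrability are quoted), and then it bounds the tail by Minkowski's inequality using $q>p^*$. You instead extract local $L^q$-integrability directly from \eqref{ineqpot}: integrating the pointwise inequality over $B_{2R}$, applying Tonelli, bounding the right-hand side by the local integrability of $f=\mathcal{I}_{2s-1}(\omega)$ (which follows from Remark~\ref{regI2s-1w}), and using the uniform lower bound $\int_{B_{2R}}|x-y|^{-(n-2s+1)}\,dx\geq |B_{2R}|(4R)^{-(n-2s+1)}$ for $y\in B_{2R}$ to peel off $\int_{B_{2R}}f^q$. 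This replaces the external citation by an elementary two-line argument, at the cost of nothing; it also makes transparent that the only role of $q>p^*$ is in the tail. Your tail bound via the pointwise estimate $f(x)\leq C|x|^{-(n-2s+1)}$ for $|x|\geq 2R$ is equivalent in substance to the paper's Minkowski computation. One small side comment: your closing remark that the finiteness of $\mathcal{I}_{2s}\bigl([\mathcal{I}_{2s-1}\omega]^q\bigr)$ ``presupposes'' $\mathcal{I}_{2s-1}\omega\in L^q_{\textnormal{loc}}$ is a heuristic, not a logical obstruction --- your own Tonelli argument shows precisely how \eqref{ineqpot}, read as an a.e.\ inequality between possibly infinite quantities, nonetheless forces that local integrability.
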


\begin{proof}
	
First of all, by Theorem 2.1 from \cite{MV} and the assumption \eqref{ineqpot} we have that

\begin{equation}\label{rieszLqloc}
\mathcal{I}_{2s-1}(\omega)\in L^q_{\textnormal{loc}}(\mathbb{R}^n).
\end{equation}

Let $R>0$ such that $\textnormal{supp}(\omega)\subset B_R$. We write

\begin{equation*}
\int_{\mathbb{R}^n}|\mathcal{I}_{2s-1}(\omega)(x)|^q\,dx = \int_{B_{2R}}|\mathcal{I}_{2s-1}(\omega)(x)|^q\,dx + \int_{\mathbb{R}^n\setminus B_{2R}}|\mathcal{I}_{2s-1}(\omega)(x)|^q\,dx.
\end{equation*}Then, by \eqref{rieszLqloc},

\begin{equation*}
\int_{B_{2R}}|\mathcal{I}_{2s-1}(\omega)(x)|^q\,dx  <\infty.
\end{equation*}On the other hand, using Minkowsky's inequality

\begin{equation*}
\int_{\mathbb{R}^n\setminus B_{2R}}|\mathcal{I}_{2s-1}(\omega)(x)|^q\,dx \leq C(n,2s-1)^q\left[\int_{B_R}\left(\int_{\mathbb{R}^n\setminus B_{2R}}\frac{1}{|x-y|^{q(n-2s+1)}}\,dx \right) ^{\frac{1}{q}}\,d\omega(y) \right] ^q
\end{equation*} 
Now, since $q>\frac{n}{n-2s+1}$
\begin{equation*}
\int_{\mathbb{R}^n\setminus B_{2R}}\frac{1}{|x-y|^{q(n-2s+1)}} \,dx  \leq \frac{R^{n+(2s-n-1)q}}{(n-2s+1)q -n}
\end{equation*}and so the integral 

\begin{equation*}
\left[\int_{B_R}\left( \int_{\mathbb{R}^n\setminus B_{2R}} \frac{1}{|x-y|^{q(n-2s+1)}} \,dx \right)^{1/q}d\omega(y)  \right]^q
\end{equation*}is finite.

\end{proof}

\begin{remark}
\rm{ Observe that by Lemma \ref{ineqgrad}} and Lemma \ref{regriesz}, $|\nabla u_0|\in L^q(\mathbb{R}^n)$. 	
\end{remark}

\begin{proposition}\label{propp}
	Under the same conditions of Lemma \ref{regriesz}, $\mathcal{I}_{2s}(|\nabla u_0|^q)\in L_s(\mathbb{R}^n)$.
\end{proposition}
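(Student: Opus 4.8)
The plan is to reduce the assertion to the information already available, namely that $g:=|\nabla u_0|^q\in L^1(\mathbb{R}^n)$: this follows from Lemmas \ref{ineqgrad} and \ref{regriesz}, which give $|\nabla u_0|\in L^q(\mathbb{R}^n)$. Note first that $\mathcal{I}_{2s}(g)$ is well defined in the sense required in the paper, because $n-2s>0$ yields $\int_{|y|\ge 1}|g(y)|\,|y|^{2s-n}\,dy\le\|g\|_{L^1(\mathbb{R}^n)}<\infty$. To prove $\mathcal{I}_{2s}(g)\in L_s(\mathbb{R}^n)$ one must show
$$\int_{\mathbb{R}^n}\frac{\mathcal{I}_{2s}(g)(x)}{1+|x|^{n+2s}}\,dx<\infty,$$
and since the integrand is non-negative, Tonelli's theorem allows one to rewrite this integral as
$$c(n,2s)\int_{\mathbb{R}^n}g(y)\,K(y)\,dy,\qquad K(y):=\int_{\mathbb{R}^n}\frac{dx}{(1+|x|^{n+2s})\,|x-y|^{n-2s}}.$$
Hence it suffices to bound $K(y)$ by a constant independent of $y$.

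To estimate $K(y)$ I would split the $x$-integration into the regions $\{|x-y|<1\}$ and $\{|x-y|\ge 1\}$. On the first region, discarding $1+|x|^{n+2s}\ge 1$ and passing to polar coordinates centred at $y$,
$$\int_{|x-y|<1}\frac{dx}{|x-y|^{n-2s}}=\frac{\omega_n}{2s},$$
which is finite precisely because $2s>0$. On the second region, discarding $|x-y|^{2s-n}\le 1$,
$$\int_{|x-y|\ge 1}\frac{dx}{1+|x|^{n+2s}}\le\int_{\mathbb{R}^n}\frac{dx}{1+|x|^{n+2s}}<\infty,$$
which converges since $n+2s>n$. Adding the two bounds gives $K(y)\le C(n,s)$ for every $y\in\mathbb{R}^n$, and therefore
$$\int_{\mathbb{R}^n}\frac{\mathcal{I}_{2s}(g)(x)}{1+|x|^{n+2s}}\,dx\le C(n,s)\,\|g\|_{L^1(\mathbb{R}^n)}<\infty,$$
which is the desired conclusion.

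An alternative route would bypass the kernel estimate entirely: from Lemma \ref{ineqgrad} one has the pointwise bound $|\nabla u_0|^q\le C_0^q\,\mathcal{I}_{2s-1}(\omega)^q$, so applying $\mathcal{I}_1$ to both sides of \eqref{ineqpot} and invoking the semigroup identity $\mathcal{I}_1\circ\mathcal{I}_{2s-1}=\mathcal{I}_{2s}$ (valid because $1+(2s-1)=2s<n$) gives $\mathcal{I}_{2s}(|\nabla u_0|^q)\le C\,\mathcal{I}_{2s}(\omega)=C\,u_0$, whereupon the claim follows from Lemma \ref{regI2sw}. Either way the argument is elementary; the only step that needs a moment's care is the uniform-in-$y$ bound for $K(y)$, that is, checking simultaneously the integrability of the local singularity (ensured by $n-2s<n$) and of the decay at infinity (ensured by $n+2s>n$), both of which hold for all $s\in(1/2,1)$.
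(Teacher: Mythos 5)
Your main argument is correct but takes a genuinely different route from the paper's. The paper establishes a pointwise a.e.\ bound on $\mathcal{I}_{2s}(|\nabla u_0|^q)$: first it majorizes $|\nabla u_0|$ by $C_0\mathcal{I}_{2s-1}(\omega)$ via Lemma \ref{ineqgrad}, then it splits the Riesz integral of $[\mathcal{I}_{2s-1}(\omega)]^q$ at unit distance from the evaluation point, replacing $|x-y|^{2s-n}$ by $|x-y|^{2s-1-n}$ inside $B_1(x)$ (so as to recover $\mathcal{I}_{2s-1}([\mathcal{I}_{2s-1}(\omega)]^q)$, which \eqref{ineqpot} controls by $\mathcal{I}_{2s-1}(\omega)$) and by the constant $1$ outside (yielding $\|\mathcal{I}_{2s-1}(\omega)\|_{L^q}^q$); membership in $L_s(\mathbb{R}^n)$ then follows from Remark \ref{regI2s-1w} and the integrability of the weight against constants. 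You instead work directly on the defining integral of the $L_s$-norm: Tonelli reduces it to checking that the kernel $K(y)=\int(1+|x|^{n+2s})^{-1}|x-y|^{2s-n}\,dx$ is bounded uniformly in $y$, which you obtain by a split at $|x-y|=1$ in the $x$-variable. This only needs $|\nabla u_0|^q\in L^1(\mathbb{R}^n)$, which you correctly extract from Lemmas \ref{ineqgrad} and \ref{regriesz}, and it does not reuse \eqref{ineqpot} at this stage; in effect you prove the cleaner, more general statement that $\mathcal{I}_{2s}$ maps $L^1_+(\mathbb{R}^n)$ into $L_s(\mathbb{R}^n)$ whenever $0<2s<n$. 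The trade-off is that the paper's computation also delivers the pointwise estimate $\mathcal{I}_{2s}([\mathcal{I}_{2s-1}(\omega)]^q)\le C[\mathcal{I}_{2s-1}(\omega)+\|\mathcal{I}_{2s-1}(\omega)\|_{L^q}^q]$, which is quoted again in the final step of the proof of Theorem \ref{Theo1}, whereas your route gives only the $L_s$-membership. Your alternative argument at the end (applying $\mathcal{I}_1$ to \eqref{ineqpot} and using the semigroup identity to get $\mathcal{I}_{2s}(|\nabla u_0|^q)\le C\,u_0$, then citing Lemma \ref{regI2sw}) is also valid, and is precisely the estimate the paper deploys later in the proof of Proposition \ref{Coro}.
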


\begin{proof}
	Using the inequality \eqref{eqgrad}, we just need to verify that $\mathcal{I}_{2s}(\left[ \mathcal{I}_{2s-1}(\omega)\right] ^q)\in L_s(\mathbb{R}^n)$. Hence
	
	\begin{equation*}
	\begin{split}
	\mathcal{I}_{2s}(\left[ \mathcal{I}_{2s-1}(\omega)\right] ^q)(x) & = \int_{\mathbb{R}^n}\frac{c(n,2s)}{|x-y|^{n-2s}}\left[ \mathcal{I}_{2s-1}(\omega)\right]^q(y)\,dy \\& = c(n,2s) \left(\int_{B_1(x)}\frac{\left[ \mathcal{I}_{2s-1}(\omega)\right]^q(y)}{|x-y|^{n-2s}}\,dy + \int_{\mathbb{R}^n\setminus B_1(x)}\frac{\left[ \mathcal{I}_{2s-1}(\omega)\right]^q(y)}{|x-y|^{n-2s}}\,dy\right) \\& \leq c(n,2s) \left(\int_{B_1(x)}\frac{\left[ \mathcal{I}_{2s-1}(\omega)\right]^q(y)}{|x-y|^{n-2s+1}}\,dy + \int_{\mathbb{R}^n\setminus B_1(x)}\left[ \mathcal{I}_{2s-1}(\omega)\right]^q(y)\,dy\right)  \\& = C(n,s)\left[ \mathcal{I}_{2s-1}(\left[ \mathcal{I}_{2s-1}(\omega)\right] ^q)(x) + \|\mathcal{I}_{2s-1}(\omega)\|^q_{L^q(\mathbb{R}^n)}\right] \\& \leq C(q,n,s)\left[ \mathcal{I}_{2s-1}(\omega)(x) + \|\mathcal{I}_{2s-1}(\omega)\|^q_{L^q(\mathbb{R}^n)}\right] .
	\end{split}
	\end{equation*}Thus, using Remark \ref{regI2s-1w} we conclude the proof. 
\end{proof}

\begin{proposition}\label{solu0}
	For $n>2s$ and $q>\frac{n}{n-2s+1}$, if there exists  $C_1=C_1(n, q, s)>0$  such that \eqref{ineqpot} is satisfied, then $\mathcal{I}_{2s}(|\nabla u_0|^q)$ is a weak solution of
	\begin{equation}\label{eq3}
	(-\Delta)^sv=|\nabla u_0|^q \quad \text{in }\mathbb{R}^{n}.
	\end{equation}
\end{proposition}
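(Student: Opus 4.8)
The plan is to reproduce, almost verbatim, the argument of Proposition \ref{SolRiesz}, with the non-negative source $g:=|\nabla u_0|^q$ playing the role of the measure $\omega$. Two facts established above make this possible. First, by Lemma \ref{regriesz} together with \eqref{eqgrad} we have $g\le C_0^q\,[\mathcal{I}_{2s-1}(\omega)]^q\in L^1(\mathbb{R}^n)$, so $g\in L^1(\mathbb{R}^n)$; second, by Proposition \ref{propp}, $\mathcal{I}_{2s}(g)\in L_s(\mathbb{R}^n)$, so that $(-\Delta)^s\mathcal{I}_{2s}(g)$ is a well-defined element of $\mathcal{S}_s'(\mathbb{R}^n)$. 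Fix $\varphi_0\in\mathcal{S}(\mathbb{R}^n)$ and, exactly as in Proposition \ref{SolRiesz}, set $\varphi(x)=|x|^{2s}\mathcal{F}(\varphi_0)(x)$ and $\psi(x)=\mathcal{F}^{-1}(\varphi)(-x)$, so that $\psi\in\mathcal{S}_s(\mathbb{R}^n)$, $\mathcal{F}(\psi)(x)=\varphi(-x)=|x|^{2s}\mathcal{F}(\varphi_0)(-x)$, and $(-\Delta)^s\varphi_0=\mathcal{F}^{-1}(\varphi)$. The goal is to verify Definition \ref{Defsolomega} for \eqref{eq3}, that is, $\int_{\mathbb{R}^n}\mathcal{I}_{2s}(g)(x)\,(-\Delta)^s\varphi_0(x)\,dx=\int_{\mathbb{R}^n}g(x)\varphi_0(x)\,dx$.

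The one genuinely new point is that, since $g$ is not compactly supported, we cannot expect $\psi*g\in\mathcal{S}_s(\mathbb{R}^n)$, so Lemma \ref{LemmaBucur} is not applicable as directly as it was in Proposition \ref{SolRiesz}; this is exactly where Remark \ref{rm Buc} enters. It suffices to check that $f:=\mathcal{F}(\psi*g)\in L^1(\mathbb{R}^n)\cap\mathcal{C}(\mathbb{R}^n)$ and that $\mathcal{F}^{-1}(f)=\psi*g$ satisfies (i) $\int_{\mathbb{R}^n}I_{2s}(x)\,(\psi*g)(x)\,dx<\infty$ and (ii) $\psi*g\in L^2(\mathbb{R}^n)$. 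Condition (ii) is immediate: $\psi\in\mathcal{S}_s(\mathbb{R}^n)\subset L^1(\mathbb{R}^n)\cap L^\infty(\mathbb{R}^n)$ and $g\in L^1(\mathbb{R}^n)$, hence $\psi*g\in L^1(\mathbb{R}^n)\cap L^\infty(\mathbb{R}^n)\subset L^2(\mathbb{R}^n)$. For (i) I split the integral over $B_1$ and $\mathbb{R}^n\setminus B_1$: since $g\ge 0$, $|\psi*g|\le|\psi|*g$ is bounded, so near the origin $\int_{B_1}I_{2s}|\psi*g|\le\|\psi*g\|_{L^\infty}\int_{B_1}|x|^{2s-n}\,dx<\infty$, while on $\mathbb{R}^n\setminus B_1$ the kernel $|x|^{2s-n}$ is bounded and $\psi*g\in L^1(\mathbb{R}^n)$. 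Finally $f=\mathcal{F}(\psi)\mathcal{F}(g)$ is continuous, and since $\|\mathcal{F}(g)\|_{L^\infty}\le\|g\|_{L^1}$ one gets $\int_{\mathbb{R}^n}|f(x)|\,dx\le\|g\|_{L^1(\mathbb{R}^n)}\int_{\mathbb{R}^n}|x|^{2s}|\mathcal{F}(\varphi_0)(-x)|\,dx<\infty$ because $\mathcal{F}(\varphi_0)\in\mathcal{S}(\mathbb{R}^n)$.

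With these checks in hand, Lemma \ref{LemmaBucur} in the form of Remark \ref{rm Buc} gives $\int_{\mathbb{R}^n}I_{2s}(x)\,(\psi*g)(x)\,dx=\int_{\mathbb{R}^n}|x|^{-2s}\mathcal{F}(\psi)(x)\mathcal{F}(g)(x)\,dx$. From here I would reproduce the chain of identities \eqref{eq 1q}--\eqref{eqq12} of Proposition \ref{SolRiesz}, simply replacing $d\omega(y)$ by $g(y)\,dy$: the Fubini/Tonelli interchanges there remain valid because $g\in L^1(\mathbb{R}^n)$ and $\psi,\mathcal{F}^{-1}(\varphi)\in\mathcal{S}_s(\mathbb{R}^n)$, and the change of variables $z=x-y$ together with the evenness of $I_{2s}$ goes through unchanged. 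This yields $\int_{\mathbb{R}^n}\mathcal{I}_{2s}(g)(x)\,\mathcal{F}^{-1}(\varphi)(x)\,dx=-\int_{\mathbb{R}^n}|x|^{-2s}\mathcal{F}(g)(x)\varphi(-x)\,dx$. Then, following the concluding computation of Proposition \ref{SolRiesz} — using $(-\Delta)^s\varphi_0=\mathcal{F}^{-1}(\varphi)$, the relation $\varphi(x)=|x|^{2s}\mathcal{F}(\varphi_0)(x)$, and Fubini applied to the absolutely convergent double integral $\int_{\mathbb{R}^n}\int_{\mathbb{R}^n}e^{2\pi i x\cdot y}\mathcal{F}(\varphi_0)(x)g(y)\,dx\,dy$ (finite since $\|\mathcal{F}(\varphi_0)\|_{L^1}\|g\|_{L^1}<\infty$) — one obtains $\int_{\mathbb{R}^n}\mathcal{I}_{2s}(g)(x)(-\Delta)^s\varphi_0(x)\,dx=\int_{\mathbb{R}^n}g(y)\varphi_0(y)\,dy$, which is precisely Definition \ref{Defsolomega} for \eqref{eq3}.

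The main obstacle is the one just flagged: because $g=|\nabla u_0|^q$ fails to be compactly supported, the clean argument of Proposition \ref{SolRiesz}, which crucially exploited $\psi*\omega\in\mathcal{S}_s(\mathbb{R}^n)$, must be rerouted through the weaker hypotheses (i)--(ii) of Remark \ref{rm Buc}, and one has to double-check that every Fubini step in that computation survives replacing the finite measure $\omega$ by the density $g\,dx$. Both difficulties are resolved by the global integrability $g\in L^1(\mathbb{R}^n)$ provided by Lemma \ref{regriesz}; everything else is a transcription of the proof of Proposition \ref{SolRiesz}.
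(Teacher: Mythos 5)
Your proof is correct and follows essentially the same route as the paper's: both invoke Remark \ref{rm Buc} in place of the full hypothesis of Lemma \ref{LemmaBucur} (precisely because $g=|\nabla u_0|^q$ is not compactly supported), reduce the verification to the three conditions $\mathcal{F}(\psi*g)\in L^1\cap\mathcal{C}$, $\int I_{2s}\,(\psi*g)<\infty$, and $\psi*g\in L^2$, and then transcribe the chain of Fourier/Fubini identities from Proposition \ref{SolRiesz} with $g\,dy$ replacing $d\omega(y)$. The only cosmetic differences are in how the checks are carried out (you bound $\psi*g$ in $L^1\cap L^\infty$ where the paper cites Young's inequality, and you split the $I_{2s}$-integral on $B_1$ and its complement directly while the paper does the equivalent split after a Fubini interchange), but the underlying input — $g\in L^1(\mathbb{R}^n)$ from Lemma \ref{regriesz} via \eqref{eqgrad} — and the structure of the argument are identical.
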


\begin{proof}
Let $\varphi_0\in \mathcal{S}(\mathbb{R}^n)$. We proceed as in Proposition \ref{SolRiesz}, naming $\varphi(x)=|x|^{2s}\mathcal{F}(\varphi_0)(x)$ and $\psi(x)=\mathcal{F}(\varphi)(-x)$. Recall that $\psi\in\mathcal{S}_s(\mathbb{R}^n)$.

We will apply Lemma \ref{LemmaBucur} as in the proof of Lemma \ref{SolRiesz} to obtain the desired result. According to Remark \ref{rm Buc}, we just need to  verify the following statements

\

\begin{enumerate}[i.]
\item $\mathcal{F}(\psi*|\nabla u_0|^q)\in L^1(\mathbb{R}^n)\cap\mathcal{C}(\mathbb{R}^n)$,

\

	\item $\int_{\mathbb{R}^n}I_{2s}(x)(\psi*|\nabla u_0|^q)(x)\,dx<\infty$, and
	
	\
	
	\item $\psi*|\nabla u_0|^q\in L^2(\mathbb{R}^n)$.
\end{enumerate}

\

First of all, $\mathcal{F}(\psi*|\nabla u_0|^q)\in L^1(\mathbb{R}^n)\cap\mathcal{C}(\mathbb{R}^n)$. The continuity follows from the fact that $\psi*|\nabla u_0|^q\in L^1(\mathbb{R}^n)$ since $\psi, |\nabla u_0|^q \in L^1(\mathbb{R}^n)$. The integrability can be checked easily using that $|\nabla u_0|^q\in L^1(\mathbb{R}^n)$. Thus, we will concentrate on  ii. and iii.  

ii. 

\begin{equation*}
\begin{split}
\int_{\mathbb{R}^n}|x|^{2s-n}(\psi*|\nabla u_0|^q)(x)\,dx &= \int_{\mathbb{R}^n}\frac{1}{|x|^{n-2s}}\left(\int_{\mathbb{R}^n}\psi(x-y)|\nabla u_0(y)|^q\,dy \right)\,dx \\& = \int_{\mathbb{R}^n}|\nabla u_0(y)|^q\left( \int_{\mathbb{R}^n}\frac{\psi(x-y)}{|x|^{n-2s}}\,dx\right)\,dy \\& = \int_{\mathbb{R}^n}|\nabla u_0(y)|^q\left(\int_{B_1}\frac{\psi(x-y)}{|x|^{n-2s}}\,dx + \int_{\mathbb{R}^n\setminus B_1}\frac{\psi(x-y)}{|x|^{n-2s}}\,dx \right)\,dy.   
\end{split}
\end{equation*}Now, since $\psi\in\mathcal{S}_s(\mathbb{R}^n)$,

\begin{equation*}
\int_{B_1}\frac{\psi(x-y)}{|x|^{n-2s}}\,dx\leq C \int_{B_1}\frac{1}{|x|^{n-2s}}\,dx = \frac{C\omega_n}{2s}.
\end{equation*}On the other hand,

\begin{equation*}
\int_{\mathbb{R}^n\setminus B_1}\frac{\psi(x-y)}{|x|^{n-2s}}\,dx\leq \|\psi\|_{L^1(\mathbb{R}^n)}.
\end{equation*}Then, 

\begin{equation*}
\int_{\mathbb{R}^n}|x|^{2s-n}(\psi*|\nabla u_0|^q)(x)\,dx\leq C(n,s,\psi)\|\nabla u_0\|_{L^q(\mathbb{R}^n)}<\infty.
\end{equation*}

iii. Follows by Young's inequality.

\end{proof}

\subsection{Step (III)}

\begin{proof}[Proof of Theorem \ref{Theo1}]
	
	We begin constructing a sequence of functions $u_{k+1}$ as follows.
	Let $u_0$ be as in Lemma \ref{ineqgrad} and define by recursion
	\begin{equation}\label{seq}
	u_{k+1}=\mathcal{I}_{2s}(|\nabla u_k|^q)+\mathcal{I}_{2s}(\omega).
	\end{equation}Then by propositions \ref{SolRiesz} and  \ref{solu0},
	
	\begin{equation}
	(-\Delta)^su_{k+1}=|\nabla u_k|^q+\omega \quad \text{in }\mathbb{R}^{n}. 
	\end{equation}We claim that
	
	\begin{equation}\label{ineq1}
	|\nabla u_k|\leq C_2\mathcal{I}_{2s-1}(\omega),	
	\end{equation}
	
	\begin{equation}\label{ineq2}
	|\nabla u_{k+1}-\nabla u_k|\leq C_3\delta^k\mathcal{I}_{2s-1}(\omega)	
	\end{equation}for some $0<\delta<1$. Let us begin with \eqref{ineq1} proceding by induction. Beacuse of Lemma \ref{ineqgrad}, \eqref{ineq1} holds for $k=0$. Suppose that
\begin{equation}\label{induc}
|\nabla u_k|\leq a_k\mathcal{I}_{2s-1}(\omega).
\end{equation}Then, we see that
	
	\begin{equation*}
	\begin{split}
	|\nabla u_{k+1}|& \leq |\nabla(\mathcal{I}_{2s}(|\nabla u_k|^q))|+|\nabla(\mathcal{I}_{2s}(\omega))|\\&\leq C_0\left( \mathcal{I}_{2s-1}(|\nabla u_k|^q)+\mathcal{I}_{2s-1}(\omega)\right) \\& \leq C_0 \left(\mathcal{I}_{2s-1}(a_k^q\left[ \mathcal{I}_{2s-1}(\omega)\right] ^q)+\mathcal{I}_{2s-1}(\omega)\right) \quad \text{by \eqref{induc}}\\&\leq C_0\left(a_k^qC_1\mathcal{I}_{2s-1}(\omega)+\mathcal{I}_{2s-1}(\omega) \right)\quad \text{by \eqref{ineqpot}}\\& = C_0(a_k^qC_1+1)\mathcal{I}_{2s-1}(\omega).  	
	\end{split}
	\end{equation*}Then
	$$|\nabla u_{k+1}|\leq a_{k+1}\mathcal{I}_{2s-1}(\omega)$$with $a_{k+1}=C_0(a_k^qC_1+1)$.
	Then, if $C_1\leq (q')^{1-q}q^{-1}C_0^{-q}$ we see that
	$$\lim_{k\to\infty}a_k=a\leq C_0q'$$where $a$ is a root of the equation $x=C_0(x^qC_1+1)$. Hence \eqref{ineq1} holds with $C_2=C_0q'$. 
	Now we prove \eqref{ineq2}. Assume \eqref{ineqpot} holds with $C_1\leq (q')^{1-q}q^{-1}C_0^{-q}$ so that \eqref{ineq1} is satisfied with $C_2=C_0q'$, where $C_0$ is the constant of \eqref{eqgrad}. 
	
	Now,
	
	\begin{equation*}
	u_1-u_0  =  \mathcal{I}_{2s}(|\nabla u_0|^q).
	\end{equation*}Then,
	
	\begin{equation*}
	\begin{split}
	|\nabla u_1 - \nabla u_0| & \leq C_0\mathcal{I}_{2s-1}(|\nabla u_0|^q) \\& \leq C_0 C_2^q \mathcal{I}_{2s-1}(\left[ \mathcal{I}_{2s-1}(\omega)\right] ^q)\\& \leq C_0C_2^qC_1\mathcal{I}_{2s-1}(\omega).
	\end{split}
	\end{equation*}Therefore
	
	$$|\nabla u_1 - \nabla u_0|\leq b_0 \mathcal{I}_{2s-1}(\omega)$$with $b_0=C_0C_2^qC_1$. Analogously, 
	
	$$u_{k+1}-u_k = \mathcal{I}_{2s}(|\nabla u_k|^q-|\nabla u_{k-1}|^q)$$and
	
	\begin{equation}\label{eqdifgrad}
	|\nabla u_{k+1}-\nabla u_k|\leq C_0\mathcal{I}_{2s-1}(|\nabla u_k|^q-|\nabla u_{k-1}|^q).
	\end{equation}Using the inequality $|r^q-s^q|\leq q|r-s|\max\{r,s\}^{q-1}$ with $r=|\nabla u_k|$ and $s=|\nabla u_{k-1}|$ we obtain
	
	\begin{equation}\label{eqdifgrad2}
	\begin{split}
	\left| |\nabla u_k|^q-|\nabla u_{k-1}|^q\right|&\leq q\left| |\nabla u_k|-|\nabla u_{k-1}|\right|\max\{|\nabla u_k|,|\nabla u_{k-1}|\}^{q-1}\\& \leq C_2^{q-1}q|\nabla u_k-\nabla u_{k-1}|\left[ \mathcal{I}_{2s-1}(\omega)\right] ^{q-1}.   
	\end{split}
	\end{equation}Thus by \eqref{eqdifgrad} and \eqref{eqdifgrad2}
	
	\begin{equation}\label{ineqgrad3}
	|\nabla u_{k+1}-\nabla u_k|\leq C_0C_2^{q-1}q\mathcal{I}_{2s-1}(|\nabla u_k-\nabla u_{k-1}|\left[ \mathcal{I}_{2s-1}(\omega)\right] ^q).
	\end{equation}Now suppose $|\nabla u_k-\nabla u_{k-1}|\leq b_k\mathcal{I}_{2s-1}(\omega)$. Then, by \eqref{ineqgrad3}
	
	\begin{equation*}
	\begin{split}
	|\nabla u_{k+1}-\nabla u_k|&\leq C_0C_2^{q-1}q\mathcal{I}_{2s-1}(b_k\left[ \mathcal{I}_{2s-1}(\omega)\right] ^q)\\& \leq C_0C_2^{q-1}qb_kC_1\mathcal{I}_{2s-1}(\omega).
	\end{split}
	\end{equation*}Then, arguing by induction we see that
	
	$$|\nabla u_{k+1}-\nabla u_k|\leq b_{k+1}\mathcal{I}_{2s-1}(\omega)$$with $b_{k+1}\leq C_0C_2^{q-1}qC_1b_k$. Thus,
	$$b_{k+1}\leq \left( C_0C_2^{q-1}qC_1\right)^{k+1}b_0$$with $b_0=C_0C_2^qC_1$.
	Taking $C_1$ such that $\delta=C_0C_2^{q-1}qC_1<1$, we obtain \eqref{ineq2} with $C_3=b_0$.

	Now we claim that
	
	\begin{equation}\label{ineq3}
	|u_{k+1}-u_k|\leq C_4\delta^k\mathcal{I}_{2s}(\left[ \mathcal{I}_{2s-1}(\omega)\right] ^q)
	\end{equation}with $C_4>0$ and $0<\delta<1$ depending only on $q$, $n$ and $s$.
	Indeed, by \eqref{eqdifgrad2} and \eqref{ineq2}
	
	\begin{equation*}
	\begin{split}
	|u_{k+1}-u_k|& \leq \mathcal{I}_{2s}\left(\left| |\nabla u_k|^q-|\nabla u_{k-1}|^q\right|  \right)\\&\leq C_2^{q-1}q\mathcal{I}_{2s}(|\nabla u_k-\nabla u_{k-1}|\left[ \mathcal{I}_{2s-1}(\omega)\right] ^{q-1})\\&\leq C_2^{q-1}qC_3\delta^k\mathcal{I}_{2s}(\left[ \mathcal{I}_{2s-1}(\omega)\right] ^q).  
	\end{split}
	\end{equation*}Then \eqref{ineq3} holds for $C_4=C_2^{q-1}qC_3$.
	
	Now suppose \eqref{ineqpot} holds for $C_1$ small enough so that \eqref{ineq1} and \eqref{ineq2} are satisfied. Let
	
	$$u(x)=u_0(x)+\sum_{k=0}^{\infty}(u_{k+1}(x)-u_k(x))$$with $u_k$ defined by \eqref{seq}. By \eqref{ineq3}, 
	$$|u_{k+1}(x)-u_k(x)|\leq C_4\delta^k\mathcal{I}_{2s}(\left[ \mathcal{I}_{2s-1}(\omega)\right]^q).$$Hence, $u(x)=\lim_{k\to\infty}u_k(x)$ and $u\in L_s(\mathbb{R}^n)$ since 
	
	\begin{equation}\label{estimate 2}
	|u|\leq C\left( \mathcal{I}_{2s}\omega + \mathcal{I}_{2s}\left[ \mathcal{I}_{2s-1}\omega\right]^q \right)
	\end{equation}On the other side, by \eqref{ineq2} 
	$$|\nabla u_{k+1}-\nabla u_k|\leq C_3\delta^k\mathcal{I}_{2s-1}(\omega),$$hence
\begin{equation}\label{to represent}
|\nabla u|\leq C\mathcal{I}_{2s-1}(\omega).
\end{equation}	Then, by Lemma \ref{regriesz} $|\nabla u|\in L^q(\mathbb{R}^n)$. Therefore, $u\in W_{\textnormal{loc}}^{1,q}(\mathbb{R}^n)\cap L_s(\mathbb{R}^n)$ (observe that $u \in L_{\text{loc}}^{q}(\mathbb{R}^{n})$ by \eqref{for the pf}).
	
	

	
	
	We now check that Definition \ref{Defsol} holds. Let $\varphi\in S(\mathbb{R}^n)$.  We have that $\nabla u(x)=\lim_{k\to\infty}\nabla u_k(x)=\nabla u_0(x)+\sum_{k=0}^{\infty}(\nabla u_{k+1}(x)-\nabla u_k(x))$ a.e. in $\mathbb{R}^n$. Also, $|\nabla u_k|^q\leq C\left[ \mathcal{I}_{2s-1}(\omega)\right] ^q\in L^1(\mathbb{R}^n)$ for all $k$. Then, letting $k\to\infty$
	$$\int_{\mathbb{R}^n}\varphi |\nabla u_k|^q\;dx \rightarrow\int_{\mathbb{R}^n}\varphi |\nabla u|^q\;dx.$$
	
	
	
	On the other side observe that, for all k
	$$|u_k|\leq C\left[  \mathcal{I}_{2s}(\omega) + \mathcal{I}_{2s}\left( \left[\mathcal{I}_{2s-1}(\omega) \right]^q \right) \right].$$Thus, since $\varphi\in\mathcal{S}(\mathbb{R}^n)$ and $\mathcal{I}_{2s}(\omega)$, $\mathcal{I}_{2s}\left( \left[ \mathcal{I}_{2s-1}(\omega)\right]^q\right) \in L_s(\mathbb{R}^n)$, we can apply Lebesgue dominated convergence theorem and obtain
	
	$$\int_{\mathbb{R}^n}u_{k+1}(-\Delta)^s\varphi\;dx \to \int_{\mathbb{R}^n}u(-\Delta)^s\varphi\;dx$$for $k\to\infty$.

	Now, since $(-\Delta)^su_{k+1}=|\nabla u_k|^q + \omega$ in $\mathbb{R}^n$, by Definition \ref{Defsol}
	
	$$\int_{\mathbb{R}^n}u_{k+1}(-\Delta)^s\varphi\;dx = \int_{\mathbb{R}^n}\varphi |\nabla u_k|^q\;dx +\int_{\mathbb{R}^n}\varphi\;d\omega.$$
	
	Consequently, letting $k\to \infty$ in the preceding equality we get the desired conclusion.

	Finally, we prove the representation  \eqref{represent}. By \eqref{seq} is enough to show that, as $k\to\infty$, $\mathcal{I}_{2s}(|\nabla u_k|^q)\to \mathcal{I}_{2s}(|\nabla u|^q)$.

First, by definition of Riesz potential we have

\begin{equation*}
|\mathcal{I}_{2s}(|\nabla u_k|^q)-\mathcal{I}_{2s}(|\nabla u|^q)|  \leq c(n,2s)\int_{\mathbb{R}^n}\frac{||\nabla u_k|^q-|\nabla u|^q|}{|x-y|^{n-2s}}\,dy
\end{equation*}

Now, by \eqref{ineq1} and \eqref{to represent} we get

\begin{equation*}
\frac{||\nabla u_k|^q-|\nabla u|^q|}{|x-y|^{n-2s}}\leq \frac{q\left|\nabla u_k-\nabla u \right|(\max\{|\nabla u_k|,|\nabla u|\})^{q-1}}{|x-y|^{n-2s}}\leq \frac{qC\left[ \mathcal{I}_{2s-1}(\omega)\right] ^q}{|x-y|^{n-2s}}
\end{equation*}

In Proposition 2.12, we proved 
\begin{equation*}
\mathcal{I}_{2s}(\left[ \mathcal{I}_{2s-1}(\omega)\right] ^q)(x)\leq C(q,n,s)\left[ \mathcal{I}_{2s-1}(\omega)(x) + \|\mathcal{I}_{2s-1}(\omega)\|^q_{L^q(\mathbb{R}^n)}\right]
\end{equation*}in $\mathbb{R}^n$. Therefore, 
$$\int_{\mathbb{R}^{n}}\frac{qC\left[ \mathcal{I}_{2s-1}(\omega)\right] ^q}{|x-y|^{n-2s}}dy= C(n,q,s)\mathcal{I}_{2s}(\left[ \mathcal{I}_{2s-1}(\omega)\right] ^q)(x)$$ is finite, so we can apply Lebesgue's Theorem to obtain

\begin{equation*}
\lim_{k\to\infty}|\mathcal{I}_{2s}(|\nabla u_k|^q)-\mathcal{I}_{2s}(|\nabla u|^q)| = c(n,2s)\int_{\mathbb{R}^n}\lim_{k\to\infty}\frac{||\nabla u_k|^q-|\nabla u|^q|}{|x-y|^{n-2s}}\,dy = 0
\end{equation*}

Hence, we get \eqref{represent}.

\end{proof}

\section{Proof of Theorem \ref{partial converse}}\label{converse sec}

  Suppose that  \eqref{eq} has a solution $u \in W^{1, q}(\mathbb{R}^{n})$.  Let $u_k \in \mathcal{C}^{\infty}_0(\mathbb{R}^{n})$ be a sequence converging to $\widetilde{u}$ in $W^{1, q}(\mathbb{R}^{n})$. Then for non-negative $\varphi \in \mathcal{C}^{\infty}_0(\mathbb{R}^{n})$ with $\varphi \geq \chi_E$ we have
\begin{equation}
\begin{split}
\omega(E)& \leq \int_{\mathbb{R}^{n}}\varphi d\omega \\ &  \leq \int_{\mathbb{R}^{n}}u(-\Delta)^{s}\varphi\quad \text{since $u$ is a solution}\\ & = \int_{\mathbb{R}^{n}}u_k(-\Delta)^{s}\varphi + o(1) \\ & = \int_{\mathbb{R}^{n}}(-\Delta)^{1/2}u_k(-\Delta)^{(2s-1)/2}\varphi+ o(1)  \quad \text{ by Lemma 2.2 in \cite{CV1}}.
\end{split}
\end{equation}By continuity of the operator $(-\Delta)^{1/2}: W^{1, q}(\mathbb{R}^{n}) \to L^{q}(\mathbb{R}^{n})$ (see for instance \cite[Theorem 2.1]{DK}) and H\"{o}lder's inequality,  we derive
\begin{equation}\label{converse 1}
\begin{split}
\omega(E) & \leq \int_{\mathbb{R}^{n}}(-\Delta)^{1/2}u(-\Delta)^{(2s-1)/2}\varphi \\ & \leq \|(-\Delta)^{1/2}u\|_{L^{q}(\mathbb{R}^{n})}  \|(-\Delta)^{(2s-1)/2}\varphi|\|_{L^{p}(\mathbb{R}^{n})}     \\ & \leq C\|\nabla u\|_{L^{q}(\mathbb{R}^{n})} \| \varphi\|_{\mathcal{L}^{p}_{2s-1}(\mathbb{R}^{n})}, 
\end{split}
\end{equation}where for $r \in (1, \infty)$ and $\alpha >0$, $\mathcal{L}_{\alpha}^{r}(\mathbb{R}^{n})$ denotes the  Bessel potential space defined as
$$\mathcal{L}_{\alpha}^{r}(\mathbb{R}^{n}):= \left\lbrace v \in L^{r}(\mathbb{R}^{n}): (I-\Delta)^{\alpha/2}v \in L^{r}(\mathbb{R}^{n}) \right\rbrace$$with the norm
 $$\| v\|_{\mathcal{L}_{\alpha}^{r}(\mathbb{R}^{n})}= \|v\|_{L^{r}(\mathbb{R}^{n})}+ \|(-\Delta)^{\alpha/2}v\|_{L^{r}(\mathbb{R}^{n})}.$$Since $\omega$ has compact support and the Riesz capacity of $E$ is zero, we obtain from \cite[Proposition 5.1.4 (b)]{AH} that the Bessel capacity of $E$, denoted by  $ C_{2s-1, p}(E)$, is also zero. Then  there is a sequence of non-negative and smooth functions $\varphi_k \in \mathcal{L}_{2s-1}^{p}(\mathbb{R}^{n})$ such that
 $$\varphi_k \geq \chi_E, \quad \varphi_k \to 0 \quad \text{in }\mathcal{L}_{2s-1}^{p}(\mathbb{R}^{n}).$$Applying \eqref{converse 1} to the sequence $\varphi_k$ and taking $k \to \infty$, we conclude
 $$\omega(E) =0.$$This ends the proof.

 \begin{remark} We now comment on the  necessity of \eqref{cap1} for existence of solutions. We have not proved the converse of Theorem \ref{Theo1}, but we exhibit below how some arguments may be applied to find a relation between the measure $\omega$, the Riesz capacity and the solution $u$.  We consider $\varphi$ as before and we take $\varphi^{p}$ as a test function to derive
\begin{equation}
\begin{split}
\omega(E)& \leq  \int_{\mathbb{R}^{n}}u_k(-\Delta)^{s}\varphi^{p}-\int_{\mathbb{R}^{n}}|\nabla u|^{q}\varphi^{p} + o(1) \\ & = \int_{\mathbb{R}^{n}}(-\Delta)^{1/2}u_k(-\Delta)^{(2s-1)/2}\varphi^{p}-\int_{\mathbb{R}^{n}}|\nabla u|^{q}\varphi^{p} + o(1)  
\end{split}
\end{equation}Letting $k \to \infty$ and by Young's inequality, we get
\begin{equation}\label{converse 11}
\begin{split}
\omega(E) & \leq \int_{\mathbb{R}^{n}}(-\Delta)^{1/2}u(-\Delta)^{(2s-1)/2}\varphi^{p}-\int_{\mathbb{R}^{n}}|\nabla u|^{q}\varphi^{p} \\ &  \leq C_p\int_{\mathbb{R}^{n}}|(-\Delta)^{1/2}u| \varphi^{p-1}(-\Delta)^{(2s-1)/2}\varphi-\int_{\mathbb{R}^{n}}|\nabla u|^{q}\varphi^{p}  \\ &  \leq  \int_{\mathbb{R}^{n}} |(-\Delta)^{1/2}u|^{q}\varphi^{p} + C_p \int_{\mathbb{R}^{n}}|(-\Delta)^{(2s-1)/2}\varphi|^{p} -\int_{\mathbb{R}^{n}}|\nabla u|^{q}\varphi^{p}\\ & = \int_{\mathbb{R}^{n}}\left( |(-\Delta)^{1/2}u|^{q}- |\nabla u|^{q}\right)\varphi^{p}  + C_p \| \varphi\|^{p}_{\mathcal{L}_{2s-1}^{p}(\mathbb{R}^{n})}.
\end{split} \end{equation}
 Taking the \'infimum over $\varphi$  in \eqref{converse 11}, we derive
\begin{equation*}
\omega(E) \leq \inf_{\varphi}\int_{\mathbb{R}^{n}}\left(|(-\Delta)^{1/2}u|^{q}- |\nabla u|^{q}\right)\varphi^{p}  + C_p C_{2s-1, p}(E).
\end{equation*}Since $\omega$ has compact support, we derive from \cite[Proposition 5.1.4 (b)]{AH} that there is a constant $C> 0$ so that
\begin{equation}\label{est with inf}
\omega(E) \leq \inf_{\varphi}\int_{\mathbb{R}^{n}}\left(|(-\Delta)^{1/2}u|^{q}- |\nabla u|^{q}\right)\varphi^{p}  + C \text{cap}_{2s-1, p}(E).
\end{equation}In the local case, exposed in \cite{Han},  the first term in \eqref{est with inf} does not appear and hence \eqref{cap1} is also necessary for existence. In the current scenario, we have not obtained the vanishing of that term. Indeed, adapting directly the proof of \cite[Lemma 2.1]{Han} to our case does not seem to be straightforward and the problem is left as open.
 \end{remark}

\end{document}